\def\eqref#1{equation~\ref{#1}}
\def\1{\bm{1}}
\def\va{{\bm{a}}}
\def\vb{{\bm{b}}}
\def\ve{{\bm{e}}}
\def\vh{{\bm{h}}}
\def\vu{{\bm{u}}}
\def\vv{{\bm{v}}}
\def\vw{{\bm{w}}}
\def\vx{{\bm{x}}}
\def\vy{{\bm{y}}}
\def\vz{{\bm{z}}}
\def\mA{{\bm{A}}}
\def\mF{{\bm{F}}}
\def\mI{{\bm{I}}}
\def\mQ{{\bm{Q}}}
\def\mU{{\bm{U}}}
\DeclareMathAlphabet{\mathsfit}{\encodingdefault}{\sfdefault}{m}{sl}
\SetMathAlphabet{\mathsfit}{bold}{\encodingdefault}{\sfdefault}{bx}{n}
\def\gO{{\mathcal{O}}}
\def\sB{{\mathbb{B}}}
\def\sC{{\mathbb{C}}}
\def\sR{{\mathbb{R}}}
\newcommand{\R}{\mathbb{R}}
\DeclareMathOperator*{\argmin}{arg\,min}
\theoremstyle{plain}
\newtheorem{thm}{Theorem}[section]
\newtheorem{dfn}{Definition}[section]
\newtheorem{lem}{Lemma}[section]
\newtheorem{asm}{Assumption}[section]
\newtheorem{remark}{Remark}[section]
\newtheorem{cor}{Corollary}[section]
\title{Second-Order Min-Max Optimization \\ with
Lazy Hessians}
\author{Lesi Chen $^*$ \\
IIIS, Tsinghua University \& Shanghai Qizhi Institute \\
\texttt{chenlc23@mails.tsinghua.edu.cn} \\
\And
Chengchang Liu $^*$ \\
The Chinese University of Hong Kong \\
\texttt{7liuchengchang@gmail.com} \\
\AND
Jingzhao Zhang $^\dagger$ \\
IIIS, Tsinghua University \& Shanghai Qizhi Institute \& Shanghai AI Lab \\
\texttt{jingzhaoz@mail.tsinghua.edu.cn}
}
\begin{document}
\maketitle
\begin{abstract}
This paper studies second-order methods for convex-concave minimax optimization. 
\citet{monteiro2012iteration} proposed a method to solve the problem with an optimal iteration complexity of
$\gO(\epsilon^{-3/2})$ to find an $\epsilon$-saddle point.  
However, it is unclear whether the
computational complexity, $\gO((N+ d^2) d \epsilon^{-2/3})$, can be improved. In the above, we follow \citet{doikov2023second} and assume the complexity of obtaining a first-order oracle as
$N$ and the complexity of obtaining a second-order oracle as $dN$. In this paper, we show that the computation cost can be reduced by reusing Hessian across iterations.
Our methods take the overall computational complexity of $ \tilde{\gO}( (N+d^2)(d+ d^{2/3}\epsilon^{-2/3}))$, which improves those of the previous methods by a factor of $d^{1/3}$. 
Furthermore, we generalize our method to strongly-convex-strongly-concave minimax problems and establish the complexity of $\tilde{\gO}((N+d^2) (d + d^{2/3} \kappa^{2/3}) )$ when the condition number of the problem is $\kappa$, enjoying a similar speedup upon the state-of-the-art method. 
Numerical experiments on both real and synthetic datasets also verify the efficiency of our method. 
\end{abstract}
{
\renewcommand\thefootnote{$^*$}
\footnotetext{Equal contributions.}
\renewcommand\thefootnote{$^\dagger$}
\footnotetext{The corresponding author.}
}

\section{Introduction}
We consider the following minimax optimization problem:
\begin{align} \label{prob:min-max}
    \min_{\vx \in \R^{d_x}} \max_{\vy \in \R^{d_y}} f(\vx,\vy),
\end{align}
where we suppose $f(\vx,\vy)$ is (strongly-)convex in $\vx$ and (strongly-)concave in $\vy$. 
This setting covers many useful applications, including functionally constrained optimization~\citep{xu2020primal}, game theory~\citep{von1947theory}, robust optimization~\citep{ben2009robust},  fairness-aware machine learning~\citep{zhang2018mitigating}, reinforcement learning~\citep{du2017stochastic,wang2017primal,paternain2022safe,wai2018multi}, decentralized optimization~\citep{kovalev2021lower,kovalev2020optimal}, AUC maximization~\citep{ying2016stochastic,hanley1982meaning,yuan2021large}. 


First-order methods are widely studied for this problem. Classical algorithms include ExtraGradient (EG)~\citep{korpelevich1976extragradient,nemirovski2004prox}, Optimistic Gradient Descent Ascent (OGDA)~\citep{popov1980modification,mokhtari2020unified,mokhtari2020convergence}, Hybrid Proximal Extragradient (HPE)~\citep{monteiro2010complexity}, and Dual Extrapolation (DE)~\citep{nesterov2006solving,nesterov2007dual}.
When the gradient of $f(\,\cdot\,,\,\cdot\,)$ is $L$-Lipschitz continuous,
these methods achieve the rate of $\gO(\epsilon^{-1})$ under the convex-concave (C-C) setting and the rate of $\gO((L/\mu)\log(\epsilon^{-1}))$ when $f(\,\cdot\,,\,\cdot\,)$ is $\mu$-strongly convex in $\vx$ and $\mu$-strongly-concave in $\vy$ (SC-SC) for $\mu>0$.
They are all optimal in C-C and SC-SC settings due to the lower bounds reported by \citep{nemirovskij1983problem,zhang2022lower}.


Second-order methods usually lead to faster rates than first-order methods when the Hessian of $f(\,\cdot\,,\,\cdot\,)$ is $\rho$-Lipschitz continuous.
A line of works~\citep{nesterov2006solving,huang2022cubic} extended the celebrated Cubic Regularized Newton (CRN)~\citep{nesterov2006cubic}
method to minimax problems with local superlinear convergence rates and global convergence guarantee. 
However, the established global convergence rates of  $\gO(\epsilon^{-1})$ by~\citet{nesterov2006solving} and $\gO((L\rho/\mu^2) \log (\epsilon^{-1}))$ by~\citet{huang2022cubic} under C-C and SC-SC conditions are no better than the optimal first-order methods. 
Another line of work
generalizes the optimal first-order methods to higher-order methods.
\citet{monteiro2012iteration} proposed the Newton Proximal Extragradient (NPE) method with a global convergence rate of $\gO(\epsilon^{-2/3} \log \log (\epsilon^{-1}))$ under the C-C conditions. This result nearly matches the lower bounds~\citep{adil2022optimal,lin2022perseus}, except an additional $\gO(\log \log(\epsilon^{-1}))$ factor which is caused by the implicit binary search at each iteration. 
\citet{bullins2022higher,adil2022optimal,huang2022approximation,lin2022explicit} provided a simple proof of NPE motivated by the EG analysis and showed that replacing the quadratic regularized Newton step with the cubic regularized Newton (CRN) step in NPE achieves the optimal second-order oracle complexity of $\gO( \epsilon^{-2/3})$.
Recently, \citet{alves2023search} proposed 
a search-free NPE method to achieve the optimal second-order oracle complexity with pure quadratic regularized Newton step based on ideas from homotopy.
Over the past decade, researchers also proposed various second-order methods, in addition to the NPE framework, that achieve the same convergence rate, such as the second-order extensions of OGDA \citep{jiang2022generalized,jiang2024adaptive}   (which we refer to as OGDA-2) and DE~\citep{lin2022perseus} (they name the method Persesus).
The results for C-C problems can also be extended to SC-SC problems, where \citet{jiang2022generalized} proved the OGDA-2 can converge at the rate of $\gO( (\rho/\mu)^{2/3} + \log \log (\epsilon^{-1}) )$, and \citet{huang2022approximation} proposed the ARE-restart with the rate of $\gO((\rho/\mu)^{2/3} \log \log (\epsilon^{-1}))$.

Although the aforementioned second-order methods~\cite{adil2022optimal,lin2022perseus,lin2022explicit,jiang2022generalized,monteiro2012iteration} enjoy an improved convergence rate over the first-order methods and have achieved optimal iteration complexities, they require querying one new Hessian at each iteration and solving a matrix inversion problem at each Newton step, which leads to a $\gO(d^{3})$ computational cost per iteration. This becomes the main bottleneck that limits the applicability of second-order methods. \citet{liu2022quasi} proposed quasi-Newton methods for saddle point problems that access one Hessian-vector product instead of the exact Hessian for each iteration. The iteration complexity is $\gO(d^2)$ for quasi-Newton methods. However, their methods do not have a global convergence guarantee under general (S)C)-(S)C conditions. \citet{jiang2023online} proposed an online-learning guided Quasi-Newton Proximal Extragradient (QNPE) algorithm, but their method relies on more complicated subroutines than classical Newton methods. Although the oracle complexity of QNPE is strictly better than the optimal first-order method EG, their method is worse in terms of total computational complexity.

In this paper, we propose a computation-efficient second-order method, which we call LEN (Lazy Extra Newton method). 
In contrast to all existing second-order methods or quasi-Newton methods for minimax optimization problems that always access new second-order information for the coming iteration, LEN reuses the second-order information from past iterations. 
Specifically, LEN solves a cubic regularized sub-problem using the Hessian from the snapshot point that is updated every $m$ iteration, then conducts an extra-gradient step by the gradient from the current iteration.
We provide a rigorous theoretical analysis of LEN to show it maintains fast global convergence rates and improves the (near)-optimal second-order methods~\citep{monteiro2012iteration} in terms of the overall computational complexity.
We summarize our contributions as follows (also see Table \ref{tab:res}).

\vspace{-0.2cm}
\begin{itemize}
    \item 
When the object function $f(\,\cdot\,,\,\cdot\,)$ is convex in $\vx$ and concave in $\vy$, we propose LEN and prove that it finds an $\epsilon$-saddle point in $\mathcal{O}(m^{2/3}\epsilon^{-2/3})$ iterations. Under Assumption \ref{asm:arith-cmp}, where 
    the complexity of calculating $\mF(\vz)$ is  $N$ and the complexity of calculating $\nabla \mF(\vz)$ is $d N$, the optimal choice is $m = \Theta(d)$. In this case, LEN
    only requires a computational complexity of $\tilde \gO( (N+d^2)(d+ d^{2/3}\epsilon^{-2/3}) )$, which is strictly better than $\gO((N+d^2) d \epsilon^{-2/3})$ for the existing optimal second-order methods by a factor of $d^{1/3}$. 
    \vspace{-0.02cm}
    \item 
    When the object function $f(\,\cdot\,,\,\cdot\,)$ is $\mu$-strongly-convex in $\vx$ and $\mu$-strongly-concave in $\vy$, we apply the restart strategy on LEN and propose LEN-restart. We prove the algorithm can find an $\epsilon$-root with 
    $\tilde \gO( (N+d^2) (d + d^{2/3} (\rho/\mu)^{2/3} ))$ computational complexity, where $\rho$ means the Hessian of $f(\cdot,\cdot)$ is $\rho$ Lipschitz-continuous. Our result
    is strictly better than the $\tilde \gO( (N+d^2) d (\rho/\mu)^{2/3} )$ in prior works.
\end{itemize} 
\begin{table*}[t]
  \caption{
  We compare the required computational complexity to achieve an $\epsilon$-saddle point of
the proposed LEN with the optimal choice $m=\Theta(d)$ and other existing algorithms on both convex-concave (C-C) and strongly-convex-strongly-concave (SC-SC) problems. Here, $d = d_x+d_y$ is the dimension of the problem. 
We assume the gradient is $L$-Lipschitz continuous for EG and the Hessian is $\rho$-Lipschitz continuous for others.
We count each gradient oracle call with $N$ computational complexity, and each Hessian oracle with $dN$ computational complexity. 
    }
        \label{tab:res}
    \centering
    \begin{tabular}{c c c }
    \hline 
    Setup & Method  & Computational Cost \\ 
     \hline \hline  \addlinespace
 & EG~\citep{korpelevich1976extragradient} & $\gO( (N+d) \epsilon^{-1} )$ \\ \addlinespace
 & NPE~\citep{monteiro2012iteration}      & $\tilde\gO ( (N+d^2) d \epsilon^{-2/3} ) $ \\ \addlinespace
C-C & search-free NPE~\citep{alves2023search} & $\gO( (N+d^2) d \epsilon^{-2/3})$ \\ \addlinespace 
 & OGDA-2~\citep{jiang2022generalized} & $\gO( (N+d^2) d \epsilon^{-2/3})$ \\ \addlinespace
    & LEN (Theorem \ref{thm:LEN-CC-complexity})    & {$\tilde \gO( (N+d^2) (d + {\color{blue} d^{2/3}} \epsilon^{-2/3}) ) $} \\ \addlinespace
\hline \addlinespace
& EG~\citep{korpelevich1976extragradient}  & $\tilde \gO( (N+d) (L/\mu))$  \\ \addlinespace 
& OGDA-2~\citep{jiang2022generalized} & $ \gO((N+d^2) d (\rho/\mu)^{2/3}) $ \\ \addlinespace
SC-SC & ARE-restart~\citep{huang2022approximation}  & $\tilde \gO((N+d^2) d (\rho/\mu))^{2/3}) $\\ \addlinespace
 & Perseus-restart~\citep{lin2022perseus} & $\tilde \gO((N+d^2) d (\rho/\mu)^{2/3}) $ \\ \addlinespace
& LEN-restart (Corollary \ref{thm:LEN-SCSC-complexity})     &{$\tilde \gO( (N+d^2) (d+{\color{blue} d^{2/3}} (\rho/\mu)^{2/3} ))$ } \\ \addlinespace
    \hline
    \end{tabular}
\end{table*}
\paragraph{Notations.} Throughout this paper, $\log$ is base $2$ and $\log_+(\,\cdot\,) := 1 + \log(\,\cdot\,)$. We use $\Vert \cdot \Vert$ to denote the spectral norm and the Euclidean norm of matrices and vectors, respectively.
We denote $\pi(t) = t-  (t \mod m )  $ where $m\in\mathbb{N_+}$.

\section{Related Works and Technical Challenges} \label{sec:related}

\paragraph{Lazy Hessian in minimization problems.} The idea of reusing Hessian was initially presented by \citet{shamanskii1967modification} and later incorporated into the Levenberg-Marquardt method, the Damped Newton method, and the proximal Newton method~\citep{fan2013shamanskii,lampariello2001global,wang2006further,adler2020new}. 
However, the explicit advantage of lazy Hessian update over ordinary Newton (-type) update was not discovered until the recent work of~\citep{doikov2023second, chayti2023unified}. 
They applied the following lazy Hessian update on cubic regularized Newton (CRN) methods~\citep{nesterov2006cubic}:
\begin{align} \label{eq:lazy-CRN}
    \vz_{t+1} = \argmin_{\vz \in \sR^d} \left\{ \langle \mF(\vz_t), \vz - \vz_t \rangle + \frac{1}{2} \langle 
 \nabla \mF({\color{blue}\vz_{\pi(t)}}) (\vz - \vz_t) , \vz- \vz_t \rangle + \frac{M}{6} \Vert \vz - \vz_t \Vert^3  \right\},
\end{align}
where $M\geq0$ and $\mF:\sR^d \rightarrow \sR^d$ is the gradient field of a convex function.
They establish the convergence rates of $\mathcal{O}(\sqrt{m}  \epsilon^{-3/2})$ for nonconvex optimization~\citep{doikov2023second}, and $\mathcal{O}(\sqrt{m} \epsilon^{-1/2})$ for convex optimization~\citep{chayti2023unified} respectively.  Such rates lead to the total computational cost of $\tilde \gO((N+d^2) (d+ \sqrt{d} \epsilon^{-3/2}))$ and $\tilde \gO((N+d^2) (d+\sqrt{d} \epsilon^{-1/2}))$ by setting $m = \Theta(d)$, which strictly improve the result by classical CRN methods by a factor of $\sqrt{d}$ in both setups.

We have also observed that the idea of the ``lazy Hessian" is widely used in practical second-order methods. KFAC~\citep{martens2015optimizing,grosse2016kronecker} approximates the Fisher information matrix and uses an exponential moving average (EMA) to update the estimate of the Fisher information matrix, which can be viewed as a soft version of lazy update.
Sophia~\citep{liu2023sophia} estimates a diagonal Hessian matrix as a pre-conditioner, which is updated in a lazy manner to reduce the complexity.
C2EDEN~\citep{liu2023communication} atomizes the communication of local Hessian in several consecutive iterations, which also benefits from the idea of lazy updates.

\paragraph{Challenge of using lazy Hessian updates in minimax problems.}
In comparison to previous work on lazy Hessian, our LEN and LEN-restart methods demonstrate the advantage of using lazy Hessian for a broader class of optimization problems, the \textit{minimax} problems.
Our analysis differs from the ones in \citet{doikov2023second,chayti2023unified}.
Their methods only take a lazy CRN update (\ref{eq:lazy-CRN}) at each iteration, 
which makes it easy to bound the error of lazy Hessian updates using Assumption \ref{asm:prob-lip-hes} and the triangle inequality in the following way: 
\begin{align*}
 \| \nabla \mF(\vz_t)-\nabla \mF(\vz_{\pi(t)})\|\leq \rho \|\vz_{\pi(t)}-\vz_t\|\leq \rho \sum_{i=\pi(t)}^{t-1}\|\vz_{i}-\vz_{i+1}\|.   
\end{align*}

Our method, on the other hand, not only takes a lazy (regularized) Newton update but also requires an extra gradient step (Line \ref{line:extra} in Algorithm \ref{alg:LEN}). Thus, the summation of  Newton progress $ \sum_{i=\pi(t)}^{t-1} \Vert \vz_{i+1/2}-\vz_i \Vert $ cannot directly bound the error term $\|\vz_t-\vz_{\pi(t)}\|$ introduced by the lazy Hessian update.
Moreover, for minimax problems the matrix $\nabla \mF(\vz_{\pi(t)})$ is no longer symmetric, which leads to different analysis and implementation of sub-problem solving (Section \ref{sec:imple}). We refer the readers to Section~\ref{sec:alg} for more detailed discussions.

\section{Preliminaries} \label{sec:pre}

In this section, we introduce the notation and basic assumptions used in our work.
We start with several standard definitions for Problem (\ref{prob:min-max}).
\begin{dfn}
We call a function $f(\vx,\vy): \sR^{d_x} \times \sR^{d_y} \rightarrow \sR$ has $\rho$-Lipschitz Hessians if we have
\begin{align*}
       \Vert \nabla^2 f(\vx, \vy) - \nabla^2 f(\vx',\vy') \Vert \le \rho \left \Vert \begin{bmatrix}
           \vx - \vx' \\
           \vy - \vy' 
       \end{bmatrix} \right \Vert, \quad \forall (\vx,\vy), (\vx', \vy') \in \sR^{d_x} \times \sR^{d_y}.
\end{align*}
\end{dfn}


\begin{dfn}
   A differentiable function $f(\cdot,\cdot)$ is $\mu$-strongly-convex-$\mu$-strongly-concave for some $\mu>0$ if
   \begin{align*}
       f(\vx', \vy) &\ge f(\vx,\vy) + (\vx' - \vx)^\top \nabla_x f(\vx,\vy) + \frac{\mu}{2} \Vert \vx - \vx' \Vert^2, \quad \forall  \vx',\vx \in \sR^{d_x}, \vy \in \sR^{d_y}; \\
       f(\vx,\vy')  &\le f(\vx,\vy) + (\vy' - \vy)^\top \nabla_y f(\vx,\vy) - \frac{\mu}{2} \Vert \vy - \vy' \Vert^2, \quad \forall \vy',\vy \in \sR^{d_y},  \vx \in \sR^{d_x}.
   \end{align*}
We say $f$ is convex-concave if $\mu=0$.
\end{dfn}
We are interested in finding a saddle point of Problem (\ref{prob:min-max}), formally defined as follows.

\begin{dfn}
   We call a point $(\vx^*,\vy^*) \in \sR^{d_x} \times  \sR^{d_y}$ a saddle point of a function $f(\cdot,\cdot)$ if we have
   \begin{align*}
       f(\vx^*,\vy ) \le f(\vx^*, \vy^*) \le f(\vx, \vy^*), \quad \forall \vx \in \R^{d_x}, ~\vy \in \R^{d_y}.
   \end{align*}
\end{dfn}

Next, we introduce all the assumptions made in this work. In this paper, we focus on Problem (\ref{prob:min-max}) that satisfies the following assumptions.

\begin{asm}
\label{asm:prob-lip-hes}
    We assume the function $f(\cdot,\cdot)$ is twice continuously differentiable, has $\rho$-Lipschitz continuous Hessians, and has at least one saddle point $(\vx^*,\vy^*)$.
\end{asm}

We will study convex-concave problems and strongly-convex-strongly-concave problems.
\begin{asm}[C-C setting] \label{asm:prob-cc}
    We assume the function $f(\cdot,\cdot)$ is convex in $\vx$ and concave in $\vy$.
\end{asm}
\begin{asm}[SC-SC setting] \label{asm:prob-scsc}
    We assume the function $f(\cdot,\cdot)$ is $\mu$-strongly-convex-$\mu$-strongly-concave. We denote the condition number as $\kappa:=\rho/\mu$.
\end{asm}

We let $d:=d_x+d_y$ and denote the aggregated variable $\vz := (\vx,\vy) \in \sR^d$. We also denote the GDA field of $f$ and its Jacobian as
\begin{align}
\begin{split}
    \mF(\vz) :=
\begin{bmatrix}
            \nabla_x f(\vx,\vy) \\
            - \nabla_y f(\vx, \vy) 
             \end{bmatrix}, \quad   \nabla \mF(\vz):= \begin{bmatrix}
        \nabla_{xx}^2 f(\vx,\vy) & \nabla_{xy}^2 f(\vx,\vy) \\
        -\nabla_{yx}^2 f(\vx, \vy) & - \nabla_{yy}^2 f(\vx,\vy) 
    \end{bmatrix}. 
\end{split}
\end{align}
The GDA field of $f(\cdot,\cdot)$ has the following properties.
\begin{lem}[Lemma 2.7 \citet{lin2022explicit}] \label{lem:op}
 Under Assumptions \ref{asm:prob-lip-hes} and \ref{asm:prob-cc},  we have
 \begin{enumerate}
     \item $\mF$ is monotone, \textit{i.e.} $\langle \mF(\vz) - \mF(\vz'), \vz - \vz' \rangle \ge 0, ~\forall \vz,\vz' \in \sR^d $. 
     \item $\nabla \mF$ is $\rho$-Lipschitz continuous, \textit{i.e.} $\Vert \nabla \mF(\vz) - \nabla \mF(\vz') \Vert \le \rho \Vert \vz - \vz' \Vert,~\forall \vz, \vz' \in \sR^d$.
     \item $\mF(\vz^*) = 0$ if and only if $\vz^* = (\vx^*,\vy^*)$ is a saddle point of function $f(\cdot,\cdot)$.
 \end{enumerate}  
 Furthermore, if Assumption~\ref{asm:prob-scsc} holds, we have $\mF(\cdot)$ is $\mu$-strongly-monotone, \textit{i.e.} 
 \begin{align*}
     \langle \mF(\vz) - \mF(\vz'), \vz - \vz' \rangle \ge \mu\|\vz-\vz'\|^2, ~\forall \vz,\vz' \in \sR^d.
 \end{align*}
\end{lem}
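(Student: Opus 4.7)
The plan is to handle each of the four claims in Lemma~\ref{lem:op} separately, since they follow from fairly standard first-order arguments applied blockwise to $f$. First, for monotonicity of $\mF$, I would invoke the standard characterization of convex-concave functions via the gradient: convexity of $f(\cdot,\vy)$ gives $\langle \nabla_x f(\vx,\vy)-\nabla_x f(\vx',\vy),\vx-\vx'\rangle\ge 0$, and concavity of $f(\vx,\cdot)$ gives $\langle -\nabla_y f(\vx,\vy)+\nabla_y f(\vx,\vy'),\vy-\vy'\rangle\ge 0$. Writing out $\langle \mF(\vz)-\mF(\vz'),\vz-\vz'\rangle$ and adding a crossed pair $\pm\mF(\vx,\vy')$ (or $\pm\mF(\vx',\vy)$) to split the difference cleanly gives the monotonicity inequality once one collects the two one-variable terms above. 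The strong monotonicity under Assumption~\ref{asm:prob-scsc} is identical in structure; one only has to add $\tfrac{\mu}{2}\|\vx-\vx'\|^2+\tfrac{\mu}{2}\|\vy-\vy'\|^2 = \tfrac{\mu}{2}\|\vz-\vz'\|^2$ from the strong convexity/concavity bounds on the two split pieces.

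For the Lipschitz bound on $\nabla \mF$, the key observation is that the only difference between $\nabla \mF(\vz)$ and the ordinary Hessian $\nabla^2 f(\vz)$ is a sign flip on the bottom block row, i.e.
\begin{equation*}
\nabla \mF(\vz) = \mD\,\nabla^2 f(\vz), \qquad \mD := \begin{bmatrix} \mI_{d_x} & \vzero \\ \vzero & -\mI_{d_y}\end{bmatrix}.
\end{equation*}
Since $\mD$ is orthogonal, it preserves the spectral norm, so $\|\nabla \mF(\vz)-\nabla \mF(\vz')\| = \|\nabla^2 f(\vz)-\nabla^2 f(\vz')\|$, and the desired $\rho$-Lipschitz bound drops out directly from Assumption~\ref{asm:prob-lip-hes}. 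This is the only step where one has to be a little careful, because $\nabla \mF$ is not symmetric and a naive blockwise bound would pick up unnecessary constants; using the orthogonality of $\mD$ avoids that.

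For the saddle-point equivalence, I would argue both directions from first-order optimality. If $\vz^*=(\vx^*,\vy^*)$ is a saddle point, then $\vx^*$ minimizes the convex function $f(\cdot,\vy^*)$ and $\vy^*$ maximizes the concave function $f(\vx^*,\cdot)$, so $\nabla_x f(\vx^*,\vy^*)=\vzero$ and $\nabla_y f(\vx^*,\vy^*)=\vzero$, which exactly means $\mF(\vz^*)=\vzero$. Conversely, if $\mF(\vz^*)=\vzero$, then by convexity of $f(\cdot,\vy^*)$ we have $f(\vx,\vy^*)\ge f(\vx^*,\vy^*)+\langle\nabla_x f(\vx^*,\vy^*),\vx-\vx^*\rangle = f(\vx^*,\vy^*)$, and symmetrically the concavity argument in $\vy$ gives the other half of the saddle-point inequality.

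Overall I do not expect any genuine obstacle here; this is a bookkeeping lemma that bundles together the basic first-order consequences of Assumptions~\ref{asm:prob-lip-hes}--\ref{asm:prob-scsc}. The only non-mechanical step is recognizing the $\mD$-conjugation trick for the Lipschitz claim, which avoids having to separately bound the four blocks of $\nabla \mF(\vz)-\nabla \mF(\vz')$ and prevents a spurious factor of $2$ or $\sqrt{2}$ in the constant.
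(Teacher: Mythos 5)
Your parts 2 and 3 are correct and cleanly argued: factoring $\nabla \mF(\vz)=\mD\,\nabla^2 f(\vz)$ with $\mD=\mathrm{diag}(\mI_{d_x},-\mI_{d_y})$ and using that left-multiplication by an orthogonal matrix preserves the spectral norm is exactly the right way to transfer the Lipschitz constant without picking up spurious factors, and the first-order-optimality characterization of a saddle point is the standard argument here. (The paper itself does not write out a proof but cites this as Lemma~2.7 of \citet{lin2022explicit}, so there is no internal proof to compare against.)

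The weak link is your proof of parts 1 and 4. Inserting a single crossed point, say $\mF(\vx',\vy)$, and dotting $\mF(\vz)-\mF(\vx',\vy)$ and $\mF(\vx',\vy)-\mF(\vz')$ each against the \emph{full} displacement $\vz-\vz'$ does not split the inner product into two sign-definite one-variable pieces: in $\langle \mF(\vx,\vy)-\mF(\vx',\vy),\ (\vx-\vx',\vy-\vy')\rangle$ the $\vy$-component $\langle -\nabla_y f(\vx,\vy)+\nabla_y f(\vx',\vy),\ \vy-\vy'\rangle$ has no sign, because here $\vx$ has changed but you are dotting against the $\vy$-displacement. Block-diagonal (per-variable) monotonicity of an operator on a product space does \emph{not} imply joint monotonicity in general; the GDA field is monotone because the off-diagonal blocks of $\nabla\mF$ are $\nabla_{xy}^2 f$ and $-\nabla_{yx}^2 f=-(\nabla^2_{xy}f)^\top$, so they cancel in the quadratic form $\langle\nabla\mF(\vz)\vv,\vv\rangle=\vu^\top\nabla_{xx}^2f\,\vu-\vw^\top\nabla_{yy}^2f\,\vw\ge 0$. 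A symptom of your imprecise decomposition is the strong-monotonicity constant: collecting ``one $\vx$-piece plus one $\vy$-piece'' as you describe yields only $\tfrac{\mu}{2}\|\vz-\vz'\|^2$, whereas the lemma claims the constant $\mu$. The fix is standard: either integrate $\langle\nabla\mF(\vz'+t(\vz-\vz'))(\vz-\vz'),\vz-\vz'\rangle$ over $t\in[0,1]$ using the quadratic-form bound above (legitimate since Assumption~\ref{asm:prob-lip-hes} gives twice differentiability), or use the symmetric four-inequality argument: write the first-order convexity inequalities between $(\vx,\vy)\!\leftrightarrow\!(\vx',\vy)$ and $(\vx',\vy')\!\leftrightarrow\!(\vx,\vy')$ and the first-order concavity inequalities between $(\vx,\vy)\!\leftrightarrow\!(\vx,\vy')$ and $(\vx',\vy')\!\leftrightarrow\!(\vx',\vy)$, sum the first two, subtract the second two, observe that all function values $f(\vx',\vy),f(\vx,\vy'),f(\vx,\vy),f(\vx',\vy')$ cancel, and rearrange the surviving gradient terms into $\langle\mF(\vz)-\mF(\vz'),\vz-\vz'\rangle\ge\mu\|\vz-\vz'\|^2$. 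That symmetrized version is the precise form of the crossed-pair idea you were reaching for.
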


For the C-C case, the commonly used optimality criterion is the following restricted gap.

\begin{dfn}[\citet{nesterov2007dual}] \label{dfn:gap}
Let $\sB_{\beta}(\vw)$ be the ball centered at $\vw$ with radius $\beta$. Let $(\vx^*,\vy^*)$ be a saddle point of function $f$.  For a given  point $(\hat \vx, \hat \vy)$, we let $\beta$ sufficiently large such that it holds
\begin{align*}
    \max \left\{ \Vert \hat \vx - \vx^* \Vert, ~ \Vert \hat \vy - \vy^*   \Vert \right\} \le \beta,
\end{align*}
we define the restricted gap function as 
    \begin{align*}
        {\rm Gap}(\hat \vx, \hat \vy ; \beta) := \max_{\vy \in \sB_{\beta}(\vy^*)} f(\hat \vx, \vy) -  \min_{\vx \in \sB_{\beta}(\vx^*)} f(\vx, \hat \vy),
    \end{align*}
We call $(\hat \vx, \hat \vy)$ an $\epsilon$-saddle point if $ {\rm Gap}(\hat \vx, \hat \vy ; \beta) \le \epsilon$ and $\beta = \Omega( \max \{ \Vert \vx_0 - \vx^* \Vert, \Vert \vy_0 - \vy^* \Vert \})$.
\end{dfn}

For the SC-SC case, we use the following stronger notion.

\begin{dfn}
Suppose that Assumption \ref{asm:prob-scsc} holds.
Let $\vz^* = (\vx^*,\vy^*)$ be the unique saddle point of function $f$. We call $\hat \vz = (\hat \vx, \hat \vy)$ an $\epsilon$-root if $\Vert \hat \vz - \vz^* \Vert \le \epsilon$.
\end{dfn}

Most previous works only consider the complexity metric as the number of oracle calls, where an oracle takes a point $\vz \in \sR^d$ as the input and returns a tuple $(\mF(\vz), \nabla \mF(\vz))$ as the output. 
The existing algorithms~\citep{monteiro2012iteration,bullins2022higher,adil2022optimal,lin2022explicit} have achieved 
optimal complexity regarding the number of oracle calls. In this work, we focus on the computational complexity of the oracle. More specifically, we distinguish between the different computational complexities of calculating the Hessian matrix $\nabla \mF(\vz)$ and the gradient $\mF(\vz)$. Formally, we make the following assumption as \citet{doikov2023second}.

\begin{asm} \label{asm:arith-cmp}
    We count the computational complexity of computing $\mF(\,\cdot\,)$ as $N$ and the computational complexity of 
    $\nabla \mF(\,\cdot\,)$ as $N d$.
\end{asm}
\begin{remark}
Assumption~\ref{asm:arith-cmp} supposes the cost of computing $\nabla \mF(\,\cdot\,)$ is $d$ times that of computing $\mF(\,\cdot\,)$. It holds in many practical scenarios as one Hessian oracle can be computed 
via $d$ Hessian-vector products on standard basis vectors $\ve_1, \cdots, \ve_d$, and one Hessian-vector product oracle is typically as expensive as one gradient oracle~\citep{wright2006numerical}:
\begin{enumerate}
    \item When the computational graph of $f$ is obtainable, both $\mF(\vz)$ and $\nabla \mF(\vz) \vv$ can be computed using automatic differentiation with the same cost for any $\vz, \vv \in \sR^d$.
    \item When $f$ is a black box function, we can estimate the Hessian-vector $\nabla \mF(\vz) \vv  $ via the finite-difference $ \vu_{\delta}(\vz;\vv) = \frac{1}{\delta} ( \mF(\vz + \delta \vv) - \mF(\vz - \delta \vv) ) $ and we have $\lim_{\delta \rightarrow 0} \vu_{\delta}(\vz;\vv) = \nabla \mF(\vz) \vv$ under mild conditions on $\mF(\,\cdot\,)$.
\end{enumerate}
\end{remark}



\section{Algorithms and convergence analysis} \label{sec:alg}

In this section, we present novel second-order methods for solving minimax optimization problems~(\ref{prob:min-max}). We present LEN and its convergence analysis for convex-concave minimax problems in Section~\ref{sec:LEN}. We generalize LEN for strongly-convex-strongly-concave minimax problems by presenting LEN-restart in Section~\ref{sec:LEN-restart}. 
We discuss the details of solving 
minimax cubic-regularized sub-problem, present detailed implementation of LEN, and give the total computational complexity of proposed methods in Section~\ref{sec:imple}.

\subsection{The LEN algorithm for convex-concave problems}
\label{sec:LEN}

We propose LEN for convex-concave problems in Algorithm~\ref{alg:LEN}.
Our method builds on 
the optimal Newton 
Proximal Extragradient (NPE) method 
~\citep{monteiro2012iteration,bullins2022higher,adil2022optimal,lin2022explicit}. The only change is that we reuse the Hessian from previous iterates, as colored {\color{blue} in blue}. Each iteration of LEN contains the following two steps:
\begin{align}
\label{eq:LEN-update}
\begin{cases}
\mF(\vz_t)+\nabla \mF({\color{blue}\vz_{\pi(t)}}) (\vz_{t+1/2}-\vz_t) + M\|{\vz}_{t+1/2}-\vz_{t}\|({\vz}_{t+1/2}-\vz_t)={\bf 0}, ~~ & \text{(Implicit Step)}\\ \addlinespace
    \vz_{t+1} = \vz_t-\dfrac{\mF({\vz}_{t+1/2})}{M\|{\vz}_{t+1/2}-\vz_t\|}. ~~ &\text{(Explicit Step)}
\end{cases}
\end{align}
The first step (implicit step) solves a cubic regularized sub-problem based on the $\nabla \mF(\vz_{\pi(t)})$ computed at the latest snapshot point and $\mF(\vz_t)$ at the current iteration point. This step is often viewed as an oracle~\citep{bullins2022higher,adil2022optimal,lin2022explicit} as there exists efficient solvers, which will also be discussed in Section \ref{sec:imple}.
The second one (explicit step) 
conducts an
extra gradient step based on $\mF(\vz_{t+1/2})$. 

Reusing the Hessian in the implicit step makes each iteration much cheaper, but would
cause additional errors compared to previous methods~\citep{monteiro2012iteration,huang2022approximation,adil2022optimal,lin2022explicit}.
The error resulting from the lazy Hessian updates is formally
characterized by the following theorem.
\begin{lem} \label{lem:LEN}
Suppose that Assumption \ref{asm:prob-lip-hes} and \ref{asm:prob-cc} hold.  For any $\vz \in \sR^d$, Algorithm \ref{alg:LEN} ensures
\begin{align*}
    \quad \gamma_t^{-1} \langle  \mF(\vz_{t+1/2}), \vz_{t+1/2} - \vz \rangle &\le \frac{1}{2} \Vert \vz_{t} - \vz \Vert^2 - \frac{1}{2} \Vert \vz_{t+1} - \vz \Vert^2  -\frac{1}{2} \Vert \vz_{t+1/2} - \vz_{t+1} \Vert^2 \\
    &\quad - \frac{1}{2} \Vert \vz_t - \vz_{t+1/2} \Vert^2  + \frac{\rho^2}{2 M^2} \Vert \vz_t - \vz_{t+1/2} \Vert^2 + \underbrace{\frac{2 \rho^2}{M^2} \Vert \vz_{\pi(t)} - \vz_t \Vert^2}_{(*)}.
\end{align*}
\end{lem}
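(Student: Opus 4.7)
I will first rewrite the algorithm in a compact form by setting $\gamma_t := M\|\vz_{t+1/2} - \vz_t\|$, so that the explicit step in (\ref{eq:LEN-update}) becomes $\vz_{t+1} = \vz_t - \gamma_t^{-1}\mF(\vz_{t+1/2})$ and hence $\gamma_t^{-1}\mF(\vz_{t+1/2}) = \vz_t - \vz_{t+1}$. The left-hand side of the lemma therefore equals $\langle \vz_t - \vz_{t+1},\, \vz_{t+1/2} - \vz\rangle$. Splitting $\vz_{t+1/2} - \vz = (\vz_{t+1/2} - \vz_{t+1}) + (\vz_{t+1} - \vz)$ and applying the two polarization identities $\langle a-b, c-b\rangle = \tfrac{1}{2}(\|a-b\|^2 + \|c-b\|^2 - \|a-c\|^2)$ and $\langle a-b, b-c\rangle = \tfrac{1}{2}(\|a-c\|^2 - \|a-b\|^2 - \|b-c\|^2)$ to the two resulting pieces, the $\tfrac{1}{2}\|\vz_t - \vz_{t+1}\|^2$ terms cancel and I arrive at the exact equality
\begin{equation*}
    \gamma_t^{-1}\langle \mF(\vz_{t+1/2}), \vz_{t+1/2} - \vz\rangle = \tfrac{1}{2}\|\vz_t - \vz\|^2 - \tfrac{1}{2}\|\vz_{t+1} - \vz\|^2 + \tfrac{1}{2}\|\vz_{t+1/2} - \vz_{t+1}\|^2 - \tfrac{1}{2}\|\vz_t - \vz_{t+1/2}\|^2.
\end{equation*}

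The only discrepancy with the target is the sign on $\|\vz_{t+1/2} - \vz_{t+1}\|^2$, which I will flip by charging it to the lazy-Hessian error budget. Combining the implicit step (which rearranges to $-\gamma_t(\vz_{t+1/2} - \vz_t) = \mF(\vz_t) + \nabla\mF(\vz_{\pi(t)})(\vz_{t+1/2} - \vz_t)$) with the explicit step gives the clean identity
\begin{equation*}
    \vz_{t+1/2} - \vz_{t+1} = \gamma_t^{-1}\vepsilon_t, \qquad \vepsilon_t := \mF(\vz_{t+1/2}) - \mF(\vz_t) - \nabla\mF(\vz_{\pi(t)})(\vz_{t+1/2} - \vz_t),
\end{equation*}
so $\vepsilon_t$ is precisely the residual of the lazy quadratic model used in the cubic subproblem. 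Writing $\mF(\vz_{t+1/2}) - \mF(\vz_t) = \int_0^1 \nabla\mF(\vz_t + s(\vz_{t+1/2} - \vz_t))(\vz_{t+1/2} - \vz_t)\,ds$ and combining Assumption~\ref{asm:prob-lip-hes} with the triangle bound $\|\vz_t + s(\vz_{t+1/2} - \vz_t) - \vz_{\pi(t)}\| \le \|\vz_t - \vz_{\pi(t)}\| + s\|\vz_{t+1/2} - \vz_t\|$ yields $\|\vepsilon_t\| \le \rho\|\vz_{t+1/2} - \vz_t\|\bigl(\|\vz_t - \vz_{\pi(t)}\| + \tfrac{1}{2}\|\vz_{t+1/2} - \vz_t\|\bigr)$. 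Since $\gamma_t = M\|\vz_{t+1/2} - \vz_t\|$, squaring and applying $(a+b/2)^2 \le 2a^2 + b^2/2$ produces the key estimate
\begin{equation*}
    \|\vz_{t+1/2} - \vz_{t+1}\|^2 \le \tfrac{2\rho^2}{M^2}\|\vz_{\pi(t)} - \vz_t\|^2 + \tfrac{\rho^2}{2M^2}\|\vz_t - \vz_{t+1/2}\|^2.
\end{equation*}

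To conclude, I will rewrite $\tfrac{1}{2}\|\vz_{t+1/2} - \vz_{t+1}\|^2$ in the exact identity as $\|\vz_{t+1/2} - \vz_{t+1}\|^2 - \tfrac{1}{2}\|\vz_{t+1/2} - \vz_{t+1}\|^2$ and upper-bound the remaining positive copy via the displayed estimate; the claimed lemma falls out immediately. The main obstacle, conceptually, is the second step: in \citet{doikov2023second, chayti2023unified} the lazy CRN is the only update per iteration, so the Hessian staleness $\|\vz_t - \vz_{\pi(t)}\|$ can be telescoped directly against the cubic-regularized progress of past Newton steps. Here the extragradient structure inserts an additional $\vz_{t+1/2}$-to-$\vz_{t+1}$ segment whose length is governed purely by $\vepsilon_t$, and recognizing the identity $\vz_{t+1/2} - \vz_{t+1} = \gamma_t^{-1}\vepsilon_t$ is what allows the $\rho$-Lipschitz-Hessian budget to simultaneously pay for the required sign flip on $\|\vz_{t+1/2} - \vz_{t+1}\|^2$ and to produce an explicit $\|\vz_{\pi(t)} - \vz_t\|^2$ residual, which will be handled at the summation stage when the lemma is later aggregated over the $m$ iterations between snapshots.
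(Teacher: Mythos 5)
Your proposal is correct and follows essentially the same route as the paper's own proof. You derive the same exact three-point identity from the extragradient step (the paper reaches it by writing $\langle \vz_t-\vz_{t+1},\vz_{t+1/2}-\vz\rangle$ as a sum of three inner products and cancelling $\tfrac12\|\vz_t-\vz_{t+1}\|^2$; your two-piece split plus polarization is the same algebra), you then identify $\vz_{t+1/2}-\vz_{t+1}=\gamma_t^{-1}\vepsilon_t$ with the lazy quadratic-model residual exactly as in the paper, and bound $\|\vepsilon_t\|$ via the $\rho$-Lipschitz Hessian and Young's inequality to get the identical coefficients $\tfrac{\rho^2}{2M^2}$ and $\tfrac{2\rho^2}{M^2}$. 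The only cosmetic difference is that you estimate $\|\vepsilon_t\|$ through the integral form of the Taylor remainder directly, while the paper inserts and subtracts $\nabla\mF(\vz_t)(\vz_{t+1/2}-\vz_t)$ and bounds the two pieces separately — both yield the same bound $\|\vepsilon_t\|\le\rho\|\vz_{t+1/2}-\vz_t\|\bigl(\|\vz_t-\vz_{\pi(t)}\|+\tfrac12\|\vz_{t+1/2}-\vz_t\|\bigr)$.
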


Above, (*) is the error from lazy Hessian updates. Note that
(*) vanishes when
the current Hessian is used. For lazy Hessian updates, the error would accumulate in the epoch. 

The key step in our analysis shows that we can use the negative terms in the right-hand side of the inequality in Lemma \ref{lem:LEN} to bound the accumulated error by choosing $M$ sufficiently large, with the help of the following technical lemma.
\begin{lem} \label{lem:seq}
    For any sequence of positive numbers $\{ r_t\}_{t \ge 0}$, it holds for any $m \ge 2$ that 
$ \sum_{t=1}^{m-1} \left( \sum_{i=0}^{t-1} r_i \right)^2 \le \frac{m^2}{2} \sum_{t=0}^{m-1} r_t^2.$
\end{lem}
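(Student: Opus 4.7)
The plan is to prove the inequality by two applications of Cauchy--Schwarz (one to convert the squared partial sum into a weighted sum of squares, and a simple swap of summation order to re-aggregate), and then to estimate a triangular sum. No negative terms or cancellations are needed; this is a purely combinatorial estimate on a positive sequence, so the main work is just bookkeeping constants.

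First I would apply the standard Cauchy--Schwarz bound
\begin{equation*}
\Bigl(\sum_{i=0}^{t-1} r_i\Bigr)^2 \;\le\; t \sum_{i=0}^{t-1} r_i^2,
\end{equation*}
which converts each squared partial sum into a linearly weighted sum of the squares $r_i^2$. Summing this bound over $t = 1, \ldots, m-1$ gives
\begin{equation*}
\sum_{t=1}^{m-1} \Bigl(\sum_{i=0}^{t-1} r_i\Bigr)^2 \;\le\; \sum_{t=1}^{m-1} t \sum_{i=0}^{t-1} r_i^2.
\end{equation*}

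Next I would swap the order of summation on the right-hand side. For each $i \in \{0, 1, \ldots, m-2\}$, the index $i$ appears in the inner sum precisely when $t \ge i+1$, so
\begin{equation*}
\sum_{t=1}^{m-1} t \sum_{i=0}^{t-1} r_i^2 \;=\; \sum_{i=0}^{m-2} r_i^2 \sum_{t=i+1}^{m-1} t \;\le\; \Bigl(\sum_{t=1}^{m-1} t\Bigr) \sum_{i=0}^{m-2} r_i^2 \;=\; \frac{(m-1)m}{2} \sum_{i=0}^{m-2} r_i^2.
\end{equation*}

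Finally, I would use the crude bounds $(m-1)m/2 \le m^2/2$ and $\sum_{i=0}^{m-2} r_i^2 \le \sum_{t=0}^{m-1} r_t^2$ to conclude the stated inequality. The only subtlety is choosing which Cauchy--Schwarz weighting to use so that the constant $m^2/2$ comes out cleanly; weighting each term by $1$ (so that the Cauchy--Schwarz factor is $t$ rather than something heavier) is what makes the final triangular sum yield exactly $(m-1)m/2$ rather than a cubic factor. There is no real obstacle here since positivity of $r_t$ only matters insofar as we freely drop the final term $r_{m-1}^2$ from the bound.
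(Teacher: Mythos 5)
Your proof is correct and takes a genuinely different route from the paper's. The paper proves Lemma~\ref{lem:seq} by induction on $m$: the base case $m=2$ is checked directly, and the inductive step adds the new term $\bigl(\sum_{i=0}^{m-1} r_i\bigr)^2$, bounds it by $m\sum_{t=0}^{m-1} r_t^2$ via Cauchy--Schwarz, and observes $\tfrac{m^2}{2}+m \le \tfrac{(m+1)^2}{2}$. Your argument is instead a direct one: apply Cauchy--Schwarz to each partial sum to get a factor of $t$, interchange the order of summation, and evaluate the resulting triangular sum. Both are short and elementary, but the direct route makes the source of the constant transparent (it is just $\sum_{t=1}^{m-1} t = \tfrac{m(m-1)}{2}$) and in fact yields the slightly sharper bound $\tfrac{m(m-1)}{2}\sum_{i=0}^{m-2} r_i^2 \le \tfrac{m^2}{2}\sum_{t=0}^{m-1} r_t^2$, whereas the inductive route hides this behind the check $\tfrac{m^2}{2}+m \le \tfrac{(m+1)^2}{2}$. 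One small wording quibble: your opening sentence announces ``two applications of Cauchy--Schwarz,'' but the interchange of summation order is not a Cauchy--Schwarz step, just finite Fubini; only one application of Cauchy--Schwarz is actually used. Also, as you note, positivity of the $r_t$ is not really needed anywhere in your argument (the $r_i^2$ are nonnegative automatically), so your proof in fact works for arbitrary real sequences.
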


When $m=1$, the algorithm reduces to the NPE algorithm~\citep{monteiro2012iteration,bullins2022higher,adil2022optimal,lin2022explicit} without using lazy Hessian updates. When $m \ge 2$, we use Lemma \ref{lem:seq} to upper bound the error that arises from lazy Hessian updates.
Finally, we prove the following guarantee for our proposed algorithm.



\begin{algorithm*}[t]  
\caption{LEN$(\vz_0, T, m,M)$}  \label{alg:LEN}
\begin{algorithmic}[1] 
\STATE \textbf{for} $t=0,\cdots,T-1$ \textbf{do} \\
\STATE \quad Compute lazy cubic step, \textit{i.e.} find $\vz_{t+1/2}$ that satisfies
\begin{align*}
    \mF(\vz_t) = (\nabla \mF({\color{blue}\vz_{\pi(t)}}) + M \Vert \vz_t - \vz_{t+1/2} \Vert \mI_{d})  (\vz_t - \vz_{t+1/2}). 
\end{align*}
\\
\STATE \quad Compute $ \gamma_t =M\|\vz_t-\vz_{t+1/2}\|$. \\
\STATE \quad Compute extra-gradient step $ \vz_{t+1} = \vz_t -  \gamma_t^{-1} \mF(\vz_{t+1/2}) $. \label{line:extra}
\STATE \textbf{end for} \\
\STATE \textbf{return} $ \bar \vz_T = \frac{1}{\sum_{t=0}^{T-1} \gamma_t^{-1}} \sum_{t=0}^{T-1} \gamma_t^{-1} \vz_{t+1/2}$.
\end{algorithmic}
\end{algorithm*}

\begin{thm}[C-C setting] \label{thm:LEN}
    Suppose that Assumption \ref{asm:prob-lip-hes} and \ref{asm:prob-cc} hold. Let $\vz^* = (\vx^*,\vy^*)$ be a saddle point and $\beta = \Vert \vz_0 - \vz^* \Vert$. Set $M \ge 3 \rho m$. The sequence of iterates generated by Algorithm \ref{alg:LEN} is bounded $\vz_t \in \sB_{\beta}(\vz^*), ~ \vz_{t+1/2} \in \sB_{3\beta}(\vz^*) , \quad \forall t = 0,\cdots,T-1,$
    and satisfies the following ergodic convergence:
    \begin{align*}
        {\rm Gap}(\bar \vx_T, \bar \vy_T; 3\beta) \le \frac{32 M \Vert \vz_0 - \vz^* \Vert^3}{T^{3/2}}.
    \end{align*}
Let $M = 3 \rho m $. Algorithm \ref{alg:LEN} finds an $\epsilon$-saddle point within $\gO(m^{2/3} \epsilon^{-2/3} )$ iterations.
\end{thm}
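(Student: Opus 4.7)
The plan is to start from the per-step bound in Lemma \ref{lem:LEN}, telescope across epochs of length $m$, and absorb the lazy-Hessian error term $(*) = \tfrac{2\rho^2}{M^2}\|\vz_{\pi(t)}-\vz_t\|^2$ using the negative Newton and extragradient terms via Lemma \ref{lem:seq}, with the choice $M \geq 3\rho m$. Applying Lemma \ref{lem:LEN} with $\vz=\vz^{*}$ and summing over $t = 0, \dots, T-1$ yields a telescoping $\tfrac12\|\vz_0-\vz^{*}\|^2 - \tfrac12\|\vz_T-\vz^{*}\|^2$ on the right-hand side together with the negatives $-\tfrac12\|\vz_t-\vz_{t+1/2}\|^2 - \tfrac12\|\vz_{t+1/2}-\vz_{t+1}\|^2$, the small positive term $\tfrac{\rho^2}{2M^2}\|\vz_t-\vz_{t+1/2}\|^2$, and $(*)$. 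The inner-product contribution $\gamma_t^{-1}\langle \mF(\vz_{t+1/2}), \vz_{t+1/2}-\vz^{*}\rangle$ on the left is nonnegative by monotonicity of $\mF$ together with $\mF(\vz^{*})=0$ (Lemma \ref{lem:op}).

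To handle $(*)$, I would use two triangle inequalities to get
\begin{equation*}
\|\vz_{\pi(t)}-\vz_t\| \;\leq\; \sum_{i=\pi(t)}^{t-1}\|\vz_{i+1}-\vz_i\| \;\leq\; \sum_{i=\pi(t)}^{t-1}\bigl(\|\vz_{i+1/2}-\vz_i\|+\|\vz_{i+1}-\vz_{i+1/2}\|\bigr),
\end{equation*}
and then apply Lemma \ref{lem:seq} with $r_i:=\|\vz_{i+1/2}-\vz_i\|+\|\vz_{i+1}-\vz_{i+1/2}\|$ to each epoch of length $m$. Combined with $(a+b)^2 \leq 2(a^2+b^2)$ this yields $\sum_{t\in\text{epoch}}\|\vz_{\pi(t)}-\vz_t\|^2 \leq m^2\sum_{i\in\text{epoch}}(\|\vz_{i+1/2}-\vz_i\|^2+\|\vz_{i+1}-\vz_{i+1/2}\|^2)$. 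With $M\geq 3\rho m$, the prefactor $\tfrac{2\rho^2 m^2}{M^2}\leq \tfrac{2}{9}$, so together with $\tfrac{\rho^2}{2M^2}\leq \tfrac{1}{18}$ the error terms are dominated by the negative quadratics, leaving a net negative of magnitude at least $\tfrac{2}{9}(\|\vz_t-\vz_{t+1/2}\|^2+\|\vz_{t+1/2}-\vz_{t+1}\|^2)$. After telescoping,
\begin{equation*}
\sum_{t=0}^{T-1}\gamma_t^{-1}\langle \mF(\vz_{t+1/2}), \vz_{t+1/2}-\vz\rangle \;+\; c\sum_{t=0}^{T-1}\|\vz_t-\vz_{t+1/2}\|^2 \;\leq\; \tfrac12\|\vz_0-\vz\|^2,
\end{equation*}
for an absolute constant $c>0$ and arbitrary $\vz\in\sR^d$.

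Specializing to $\vz=\vz^{*}$ and discarding the nonnegative inner-product term gives $\|\vz_T-\vz^{*}\|\leq\beta$, so $\vz_t\in\sB_\beta(\vz^{*})$ inductively, together with the aggregate bound $\sum_t\|\vz_t-\vz_{t+1/2}\|^2 = O(\beta^2)$; the half-step containment $\vz_{t+1/2}\in\sB_{3\beta}(\vz^{*})$ then follows from a direct size estimate for a single cubic Newton step using the implicit equation in Algorithm \ref{alg:LEN}. For the ergodic gap, the convex-concave structure gives $f(\vx_{t+1/2},\vy)-f(\vx,\vy_{t+1/2}) \leq \langle \mF(\vz_{t+1/2}),\vz_{t+1/2}-\vz\rangle$; applying Jensen's inequality to the $\gamma_t^{-1}$-weighted average $\bar\vz_T$ and maximizing over $\vz\in\sB_{3\beta}(\vz^{*})$ (where $\|\vz_0-\vz\|\leq 4\beta$) yields $\mathrm{Gap}(\bar\vx_T,\bar\vy_T;3\beta)\leq 8\beta^2/\sum_t\gamma_t^{-1}$. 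Cauchy--Schwarz gives $\sum_t\gamma_t=M\sum_t\|\vz_t-\vz_{t+1/2}\|\leq M\sqrt{T\sum_t\|\vz_t-\vz_{t+1/2}\|^2}=O(M\beta\sqrt{T})$, and the harmonic-mean inequality $\sum_t\gamma_t^{-1}\geq T^2/\sum_t\gamma_t$ then yields $\sum_t\gamma_t^{-1}=\Omega(T^{3/2}/(M\beta))$, so $\mathrm{Gap}=O(M\beta^3/T^{3/2})$; setting $M=3\rho m$ and solving for $T$ gives the claimed $\gO(m^{2/3}\epsilon^{-2/3})$ iteration complexity. The main obstacle, as expected from the discussion in Section \ref{sec:related}, is precisely the step $\vz_{t+1}-\vz_t$ no longer being the lazy Newton step alone — Lemma \ref{lem:seq} has to be applied to the combined Newton-plus-extragradient displacement, which is exactly what the triangle-inequality decomposition above enables.
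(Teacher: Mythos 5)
Your proposal is correct and follows essentially the same route as the paper's proof: apply Lemma \ref{lem:LEN} with $\vz=\vz^*$, absorb the lazy-Hessian error $(*)$ by controlling $\|\vz_{\pi(t)}-\vz_t\|$ through the accumulated increments via Lemma \ref{lem:seq} with $M \ge 3\rho m$, telescope per epoch and then across epochs to obtain $\sum_t\eta_t\langle \mF(\vz_{t+1/2}),\vz_{t+1/2}-\vz\rangle \le \tfrac12\|\vz_0-\vz\|^2 - c\sum_t\|\vz_t-\vz_{t+1/2}\|^2$, and combine with Lemmas \ref{lem:gap-gap}--\ref{lem:avg} and a Cauchy--Schwarz/H\"older-type lower bound on $\sum_t\gamma_t^{-1}$; your Cauchy--Schwarz plus AM--HM step is algebraically equivalent to the paper's one-shot H\"older inequality. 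Two cosmetic notes: the half-step containment $\vz_{t+1/2}\in\sB_{3\beta}(\vz^*)$ follows most directly from $\tfrac18\sum_t\|\vz_t-\vz_{t+1/2}\|^2\le\tfrac12\beta^2$ rather than a separate "size estimate" of the cubic step, and per Lemma \ref{lem:gap-gap} the maximization should be over $\sB_{3\sqrt{2}\beta}(\vz^*)$ rather than $\sB_{3\beta}(\vz^*)$, which changes only the explicit constant (still bounded by the stated $32$).
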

\paragraph{Discussion on the computational complexity of the oracles.}
Theorem~\ref{thm:LEN} indicates that LEN requires $\gO(m^{2/3} \epsilon^{-2/3} )$ calls to $\mF(\cdot)$ and $\gO(m^{2/3}\epsilon^{-2/3}/m+1)$ calls to $\nabla \mF(\cdot)$ to find the $\epsilon$-saddle point. Under Assumption~\ref{asm:arith-cmp}, the computational cost to call the oracles $\mF(\cdot)$ and $\nabla \mF(\cdot)$ is
\begin{align}
\label{eq:compu-cost}
 \text{Oracle Computational Cost} = \gO\left(N\cdot m^{2/3}\epsilon^{-2/3} + (Nd)\cdot\left({\epsilon^{-2/3}}{m^{-1/3}}+1\right)\right).
\end{align}
Taking $m=\Theta(d)$ minimizes (\ref{eq:compu-cost}) to $\gO(Nd+N\textcolor{blue}{d^{2/3}}\epsilon^{-2/3})$.
Compared to state-of-the-art second-order methods~\citep{monteiro2012iteration,bullins2022higher,adil2022optimal,lin2022explicit}, whose computational cost in terms of the oracles is $\gO(Nd\epsilon^{-2/3})$ since they require to query $\nabla \mF(\cdot)$ at each iteration, our methods significantly improve their results by a factor of $d^{1/3}$.


It is worth noticing that the computational cost of an algorithm includes both the computational cost of calling the oracles, which we have discussed above, and the computational cost of performing the updates (\textit{i.e.} solving auxiliary problems) after accessing the required oracles.
We will give an efficient implementation of LEN and analyze the total computational cost later in Section~\ref{sec:imple}.



\subsection{The LEN-restart algorithm for strongly-convex-strongly-concave problems}
\label{sec:LEN-restart}
We generalize LEN to solve the strongly-convex-strongly-concave minimax optimization by incorporating the restart strategy introduced by~\citet{huang2022approximation,lin2022perseus}. We propose the LEN-restart in Algorithm \ref{alg:LEN-restart}, which works in epochs. 
Each epoch of LEN-restart invokes LEN (Algorithm~\ref{alg:LEN}), which gets $\vz^{(s)}$ as inputs and outputs $\vz^{(s+1)}$.

The following theorem shows that the sequence $\{ \vz^{(s)}\} $ enjoys a superlinear convergence in epochs. 
Furthermore, the required number of iterations in each epoch to achieve such a superlinear rate
is only a constant.

\begin{algorithm*}[t]  
\caption{LEN-restart$(\vz_0, T, m,M, S)$}  \label{alg:LEN-restart}
\begin{algorithmic}[1] 
\STATE $\vz^{(0)} = \vz_0$ \\
\STATE \textbf{for} $s=0,\cdots,S-1$\\
\STATE \quad $\vz^{(s+1)} =\text{LEN}(\vz^{(s)},T,m,M) $ \\
\textbf{end for}
\end{algorithmic}
\end{algorithm*}

\begin{thm}[SC-SC setting] \label{thm:restart-LEN}
Suppose that Assumptions \ref{asm:prob-lip-hes} and \ref{asm:prob-scsc} hold. Let $\vz^* = (\vx^*,\vy^*)$ be the unique saddle point. Set $M = 3 \rho m $ as Theorem \ref{thm:LEN} and $T = \left( \frac{2 M \Vert \vz_0 - \vz^* \Vert}{\mu} \right)^{2/3}.$
Then the sequence of iterates generated by Algorithm \ref{alg:LEN-restart} converge to $\vz^*$ superlinearly as $ \Vert \vz^{(s)} - \vz^* \Vert^2 \le \left( \frac{1}{2}\right)^{(3/2)^{s}} \Vert \vz_0 - \vz^* \Vert^2.$ 
In other words, Algorithm \ref{alg:LEN-restart} finds a point $\vz^{(s)}$ such that $\Vert \vz^{(s)} - \vz^* \Vert \le \epsilon$ within $ S = \log_{3/2} \log_{2} (\nicefrac{1}{\epsilon}) $  epochs.
The total number of inner loop iterations is given by
\begin{align*}
    TS = \gO \left(m^{2/3} \kappa^{2/3} \log \log (\nicefrac{1}{\epsilon}) \right).
\end{align*}
Under Assumption \ref{asm:arith-cmp}, Algorithm \ref{alg:LEN-restart} with $m = \Theta(d)$ takes the computational complexity of $\gO( (N d+  N d^{2/3} \kappa^{2/3}) \log \log (\nicefrac{1}{\epsilon}))$ to call the oracles $\mF(\,\cdot\,)$ and $\nabla \mF(\,\cdot\,)$.
\end{thm}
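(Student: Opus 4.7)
The plan is to turn each call to LEN into a superlinear contraction in the distance to the unique saddle point $\vz^*$, and then chain these contractions across the $S$ outer epochs. The engine is Theorem~\ref{thm:LEN}, combined with a gap-to-distance conversion enabled by strong convexity-concavity.

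First, I apply Theorem~\ref{thm:LEN} at the $s$-th epoch. Let $r_s := \Vert \vz^{(s)} - \vz^* \Vert$. Invoking the theorem on the call $\text{LEN}(\vz^{(s)},T,m,M)$ with initial radius $\beta = r_s$ yields
\begin{equation*}
    \mathrm{Gap}(\vz^{(s+1)}; 3 r_s) \leq \frac{32 M r_s^3}{T^{3/2}}.
\end{equation*}
Next, I convert this gap bound into a distance bound via Assumption~\ref{asm:prob-scsc}. Since $\mF(\vz^*) = \vzero$, strong convexity in $\vx$ gives $f(\vx^{(s+1)}, \vy^*) - f(\vx^*, \vy^*) \geq (\mu/2)\Vert \vx^{(s+1)} - \vx^* \Vert^2$, and strong concavity in $\vy$ gives the analogous bound on $f(\vx^*, \vy^*) - f(\vx^*, \vy^{(s+1)})$; summing and noting that $\vx^*, \vy^*$ lie in their respective balls of radius $3 r_s$ shows $\mathrm{Gap}(\vz^{(s+1)}; 3 r_s) \geq (\mu/2) r_{s+1}^2$. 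Combining, I obtain the master recursion
\begin{equation*}
    r_{s+1}^2 \leq \frac{64 M}{\mu T^{3/2}} r_s^3.
\end{equation*}

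With the prescribed $T = (2 M r_0/\mu)^{2/3}$ (inflated, if necessary, by an absolute constant to absorb the factor $64$), the leading multiplier is at most $1/r_0$. A one-line induction then establishes the superlinear bound: assuming $r_s \leq 2^{-(3/2)^s/2}\, r_0$, one has
\begin{equation*}
    r_{s+1}^2 \leq \frac{r_s^3}{r_0} \leq \frac{2^{-3(3/2)^s/2}\, r_0^3}{r_0} = \left(\frac{1}{2}\right)^{(3/2)^{s+1}} r_0^2,
\end{equation*}
which is exactly the claimed rate. Solving $(1/2)^{(3/2)^S/2} r_0 \leq \epsilon$ yields $S = \gO(\log_{3/2}\log_2(1/\epsilon))$ outer epochs. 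Multiplying gives $TS = \gO(m^{2/3}\kappa^{2/3}\log\log(1/\epsilon))$ after substituting $M = 3\rho m$ and $\kappa = \rho/\mu$. Each inner iteration calls $\mF$ a constant number of times (cost $N$ per call), and $\nabla\mF$ is queried only at snapshots, i.e., once every $m$ inner iterations plus once at the start of each epoch; setting $m = \Theta(d)$ and multiplying the $\gO(TS/m + S)$ Hessian queries by the $Nd$ per-query cost yields $\tilde\gO((Nd + Nd^{2/3}\kappa^{2/3})\log\log(1/\epsilon))$ as claimed.

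The main obstacle is bookkeeping rather than conceptual: (i) the constants in $T$ must be chosen large enough that the recursion has multiplier at most $1/r_0$, which may require strengthening the stated $T$ by an absolute constant; and (ii) one must verify that the ball of radius $3 r_s$ in Definition~\ref{dfn:gap} always contains $\vz^*$ across epochs, which follows automatically from the monotone decrease $r_s \leq r_0$ furnished by the induction itself. The initial-distance dependence $r_0$ inside $T$ is absorbed either into the $\log\log$ factor or treated as a problem-dependent constant.
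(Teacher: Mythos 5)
Your route through the gap bound of Theorem~\ref{thm:LEN} plus the strong-convexity gap-to-distance conversion is sound and reaches the same recursion, but it is \emph{not} what the paper does. The paper's proof never passes through $\mathrm{Gap}$: it instead substitutes $\vz=\vz^*$ in the telescoped inequality~(\ref{eq:tele}), invokes $\mu$-strong monotonicity of $\mF$ from Lemma~\ref{lem:op} to get $\mu\sum_t\eta_t\Vert\vz_{t+1/2}-\vz^*\Vert^2\le\tfrac12\Vert\vz_0-\vz^*\Vert^2$, applies Jensen's inequality to $\bar\vz_T$, and then plugs in the lower bound~(\ref{eq:lower-eta}) on $\sum\eta_t$. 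That gives $\Vert\bar\vz_T-\vz^*\Vert^2\le\frac{M\Vert\vz_0-\vz^*\Vert^3}{\mu T^{3/2}}$ directly, with a clean contraction factor $c=1/2$ for the $T$ as stated in the theorem. Your detour through $\mathrm{Gap}(\,\cdot\,;3r_s)\ge(\mu/2)r_{s+1}^2$ is correct as an inequality (the balls $\sB_{3r_s}(\vx^*)$, $\sB_{3r_s}(\vy^*)$ contain the saddle point trivially, and $\bar\vz_T\in\sB_{3r_s}(\vz^*)$ by the boundedness claim in Theorem~\ref{thm:LEN}), but it loses a factor of $64$ relative to the direct argument, so it buys modularity at the price of having to redefine $T$.

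One concrete gap in your write-up: the induction does not actually start. You target multiplier ``at most $1/r_0$'', i.e.\ $r_{s+1}^2\le r_s^3/r_0$, but then the base case reads $r_1^2\le r_0^3/r_0=r_0^2$, which only gives $r_1\le r_0$ and not the required $r_1\le 2^{-3/4}\,r_0$; the induction step is fine, but the chain never gets off the ground. You need the multiplier to be at most $(1/2)^{3/2}/r_0$, i.e.\ $r_{s+1}^2\le(1/2)^{3/2}r_s^3/r_0$, so that $r_1^2\le(1/2)^{3/2}r_0^2$ and then the step $r_{s+1}^2\le(1/2)^{3/2}r_s^3/r_0\le(1/2)^{(3/2)^{s+1}}r_0^2$ goes through for all $s\ge1$. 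This just means inflating $T$ by $(64\sqrt{2})^{2/3}$ instead of $64^{2/3}$, an absolute constant that does not change $TS=\gO(m^{2/3}\kappa^{2/3}\log\log(1/\epsilon))$; but as written your base case is false. (For what it is worth, the paper's own induction with $c=1/2$ yields exponent $q_s=2(3/2)^s-2$, which also dips below $(3/2)^s$ at $s=1$, so the stated bound should really be read as holding for $s\ge2$ or up to a harmless shift — your proof and the paper's proof both inherit this edge-case looseness.) Your accounting of oracle calls and the final $\gO((Nd+Nd^{2/3}\kappa^{2/3})\log\log(1/\epsilon))$ complexity is correct.
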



\subsection{Implementation Details and computational complexity Analysis} \label{sec:imple} 

We provide details of implementing the cubic regularized Newton oracle (Implicit Step, (\ref{eq:LEN-update})). Inspired by \citet{monteiro2012iteration,bullins2022higher,adil2022optimal,lin2022explicit}, we transform the sub-problem into a root-finding problem for a univariate function. 



\begin{lem}[Section 4.3 \citet{lin2022explicit}]
Suppose Assumption \ref{asm:prob-lip-hes} and \ref{asm:prob-cc} hold for function $f:\sR^{d_x} \times \sR^{d_y} \rightarrow \sR$ and let
$\mF$ be its GDA field. 
Define $\gamma_t = M \Vert \vz_{t+1/2} - \vz_t \Vert$.
The cubic regularized Newton oracle (Implicit Step, (\ref{eq:LEN-update})) can be rewritten as:
\begin{align*}
    \vz_{t+1/2} = \vz_t - ( \nabla \mF(\vz_{\pi(t)}) +  \gamma_t \mI_d )^{-1} \mF(\vz_t),
\end{align*}
which can be implemented by finding the root of the following univariate function:
\begin{align} \label{eq:phi}
    \phi(\gamma):= M \left \Vert \right 
    ( \nabla \mF(\vz_{\pi(t)}) +  \gamma \mI_d )^{-1} \mF(\vz_t)
    \Vert - \gamma.
\end{align}
Furthermore, the function $\phi(\gamma)$ is strictly decreasing 
when $\lambda>0$.
\end{lem}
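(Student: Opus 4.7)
The plan is to convert the implicit step into a parameterized linear solve, read off the scalar equation that the regularization level must satisfy, and finally differentiate to establish the monotonicity. First, I introduce the shorthand $\mB(\gamma) := \nabla \mF(\vz_{\pi(t)}) + \gamma \mI_d$ and rewrite the implicit step in (\ref{eq:LEN-update}) as $\mB(\gamma_t)(\vz_t - \vz_{t+1/2}) = \mF(\vz_t)$ with $\gamma_t = M\Vert \vz_{t+1/2}-\vz_t\Vert$. To invert $\mB(\gamma_t)$ I would invoke Lemma \ref{lem:op}: monotonicity of $\mF$ implies $\nabla \mF(\vz_{\pi(t)}) + \nabla \mF(\vz_{\pi(t)})^\top \succeq 0$ (equivalently, from the block structure and Assumption \ref{asm:prob-cc}, the sum equals ${\rm diag}\bigl(2\nabla_{xx}^2 f,\,-2\nabla_{yy}^2 f\bigr)\succeq 0$). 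Hence for any nonzero $\vv$ and any $\gamma>0$, $\langle \vv, \mB(\gamma)\vv\rangle \ge \gamma \Vert \vv\Vert^2 > 0$, so $\mB(\gamma)$ is nonsingular, which yields $\vz_{t+1/2} = \vz_t - \mB(\gamma_t)^{-1}\mF(\vz_t)$.

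Second, taking norms of this identity and substituting the definition of $\gamma_t$ gives $\gamma_t / M = \Vert \mB(\gamma_t)^{-1}\mF(\vz_t)\Vert$, which is exactly $\phi(\gamma_t) = 0$. So identifying the correct cubic regularization level reduces to a one-dimensional root-finding problem in $\gamma$, which can be handled by bisection once the monotonicity claim is proved.

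Finally, for strict monotonicity of $\phi$ on $(0,\infty)$, I would show that $\gamma \mapsto \Vert \mB(\gamma)^{-1}\mF(\vz_t)\Vert$ is strictly decreasing (the degenerate case $\mF(\vz_t)=0$ means $\vz_t$ is already a saddle point and no step is needed). Setting $\vu(\gamma) := \mB(\gamma)^{-1}\mF(\vz_t)$, differentiating $\mB(\gamma)\vu(\gamma)=\mF(\vz_t)$ in $\gamma$ yields $\tfrac{d\vu}{d\gamma} = -\mB(\gamma)^{-1}\vu$, so $\tfrac{d}{d\gamma}\Vert \vu\Vert^2 = -2\langle \vu,\mB(\gamma)^{-1}\vu\rangle$. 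The key algebraic step is the identity
\begin{align*}
\mB(\gamma)^{-1} + \mB(\gamma)^{-\top} = \mB(\gamma)^{-\top}\bigl(\mB(\gamma) + \mB(\gamma)^{\top}\bigr)\mB(\gamma)^{-1},
\end{align*}
which, combined with $\mB(\gamma)+\mB(\gamma)^{\top}\succeq 2\gamma \mI_d$, gives
\begin{align*}
\langle \vu,\mB(\gamma)^{-1}\vu\rangle = \tfrac{1}{2}\langle \vu,(\mB(\gamma)^{-1}+\mB(\gamma)^{-\top})\vu\rangle \;\ge\; \gamma\, \Vert \mB(\gamma)^{-1}\vu\Vert^2 \;>\; 0.
\end{align*}
Hence $\Vert \vu(\gamma)\Vert$ is strictly decreasing, and $\phi(\gamma) = M\Vert \vu(\gamma)\Vert - \gamma$ is the difference of a strictly decreasing function and a strictly increasing one, so $\phi$ is strictly decreasing.

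The main obstacle, and the only real departure from the symmetric-Hessian analysis of \citet{doikov2023second,chayti2023unified}, is that $\nabla \mF(\vz_{\pi(t)})$ is nonsymmetric in the minimax setting, so the usual spectral argument based on a joint eigendecomposition of $\nabla \mF$ and $\mI_d$ is unavailable. The symmetrization identity above is precisely what allows us to still leverage the monotonicity of $\mF$ to control the spectrum of an asymmetric inverse and thereby conclude strict monotonicity of $\phi$.
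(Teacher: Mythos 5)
Your proof is correct, and it is self-contained: the paper itself does not supply a proof of this lemma---it simply attributes the statement to \citet{lin2022explicit}, Section~4.3---so there is no in-paper argument to compare against. Your derivation follows what one would expect: (i) invertibility of $\mB(\gamma) = \nabla\mF(\vz_{\pi(t)}) + \gamma\mI_d$ for $\gamma>0$ from positive-semidefiniteness of the symmetric part $\nabla\mF + \nabla\mF^\top = \mathrm{diag}(2\nabla_{xx}^2 f,\,-2\nabla_{yy}^2 f)\succeq 0$; (ii) reading off $\phi(\gamma_t)=0$ from the definition $\gamma_t = M\Vert\vz_{t+1/2}-\vz_t\Vert$; and (iii) strict monotonicity via $\tfrac{d}{d\gamma}\Vert\mB(\gamma)^{-1}\mF(\vz_t)\Vert^2 = -2\langle\vu,\mB(\gamma)^{-1}\vu\rangle < 0$, with the symmetrization $\mB^{-1}+\mB^{-\top}=\mB^{-\top}(\mB+\mB^\top)\mB^{-1}$ doing the work that a spectral decomposition would do in the symmetric case. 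Two small remarks: the lemma as printed in the paper says ``when $\lambda > 0$''---that is a typo for $\gamma > 0$, and your proof correctly works on $(0,\infty)$; and you correctly dispose of the degenerate case $\mF(\vz_t)=0$, without which the strict inequality $\langle\vu,\mB^{-1}\vu\rangle>0$ would fail.
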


From the above lemma, to implement the cubic regularized Newton oracle, it suffices to find the root of a strictly decreasing function $\phi(\gamma)$, which can be solved within $\tilde \gO(1)$ iteration.
The main operation is to solve the following linear system:
\begin{align} \label{eq:linear-sys}
    (\nabla \mF(\vz_{\pi(t)}) + \gamma \mI_d) \vh = \mF(\vz_t).
\end{align}
Naively solving this linear system at every iteration still results in an expensive computational complexity of $\gO(d^{3})$ per iteration. 

We present a computationally efficient way to implement LEN by leveraging the Schur factorization at the snapshot point $\nabla \mF(\vz_{\pi(t)}) = \mQ \mU \mQ^{-1}$,  where $\mQ \in \sC^{d \times d}$ is a unitary matrix and $\mU \in \sC^{d \times d}$ is an upper-triangular matrix. 
Then solving the linear system~(\ref{eq:linear-sys}) is equivalent to
\begin{align} \label{eq:tri-linear-sys}
    \vh = \mQ ( \mU + \gamma \mI_d)^{-1} \mQ^{-1} \mF(\vz_t).
\end{align}
The final implementable algorithm is presented in Algorithm \ref{alg:LEN-imple}.

Now, we are ready to analyze the total computational complexity of LEN, which can be divided into the following two parts:
{
\begin{align*}
     \text{Computational Cost}
    &= \text{Oracle Computational Cost} + \text{Update Computational Cost},
\end{align*}
}
\!where the first part has been discussed in Section~\ref{sec:LEN}. 
Regarding the update computational cost, 
the Schur decomposition with an computational complexity $\gO(d^{3})$ is required once every $m$ iterations. 
After Schur's decomposition has been given at the snapshot point, the dominant part of the update computational complexity is solving the upper-triangular linear system~(\ref{eq:tri-linear-sys}) with the back substitution algorithm within the computational complexity ${\gO}(d^2)$.
Thus, we have
\begin{align}
\label{eq:update-cost}
    \text{Update Computational Cost} = \tilde{\gO}\left(d^2\cdot m^{2/3}\epsilon^{-2/3}+d^{3}\cdot\left({\epsilon^{-2/3}}{m^{-1/3}}+1\right)\right),
\end{align}
and the total computational cost of LEN is
\begin{align}
\label{eq:total-compu-cost}
  \text{Computational Cost} \overset{(\ref{eq:compu-cost}),(\ref{eq:update-cost})}{=} \!\tilde{\gO}\left((d^2+N)\cdot m^{2/3}\epsilon^{-2/3} + (d^3+Nd)\cdot\left({\epsilon^{-2/3}}{m^{-1/3}}+1\right)\!\right).   
\end{align}
By taking $m=\Theta(d)$, we obtain the best computational complexity in (\ref{eq:total-compu-cost}) of LEN, which is formally stated in the following theorem.
\begin{thm}[C-C setting] \label{thm:LEN-CC-complexity}
Under the same setting of Theorem \ref{thm:LEN}, Algorithm \ref{alg:LEN-imple} with $m = \Theta(d)$ finds an $\epsilon$ -saddle point with computational complexity $\tilde \gO((N+d^2) (d+ d^{2/3} \epsilon^{-2/3})$.
\end{thm}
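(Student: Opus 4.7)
The plan is to bookkeep the two components already separated in the paragraph leading up to the statement (oracle computational cost and update computational cost) and then minimize their sum over $m$. Nothing new about convergence is required: Theorem \ref{thm:LEN} already guarantees that Algorithm \ref{alg:LEN} (and hence its implementable version \ref{alg:LEN-imple}) produces an $\epsilon$-saddle point in $T=\mathcal{O}(m^{2/3}\epsilon^{-2/3})$ outer iterations when $M=3\rho m$. The work is purely accounting.

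First I would tally the oracle cost. Each iteration calls $\mF(\cdot)$ a constant number of times, and $\nabla\mF(\cdot)$ is only queried at a snapshot point, i.e.\ once every $m$ iterations. Together with Assumption \ref{asm:arith-cmp} this gives exactly the bound (\ref{eq:compu-cost}), namely
\begin{equation*}
  \text{Oracle cost} \;=\; \mathcal{O}\bigl(N\cdot m^{2/3}\epsilon^{-2/3} + Nd\cdot(m^{-1/3}\epsilon^{-2/3}+1)\bigr).
\end{equation*}
Next I would tally the update cost. At each snapshot I perform one Schur factorization $\nabla\mF(\vz_{\pi(t)})=\mQ\mU\mQ^{-1}$ at cost $\mathcal{O}(d^3)$, which happens $\mathcal{O}(T/m+1)=\mathcal{O}(m^{-1/3}\epsilon^{-2/3}+1)$ times. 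Within every inner iteration, the implicit step reduces via the lemma preceding Algorithm \ref{alg:LEN-imple} to finding the root of the strictly decreasing scalar function $\phi(\gamma)$, which a standard bisection/Newton routine locates in $\tilde\mathcal{O}(1)$ queries; each query evaluates $\phi(\gamma)$ by solving the triangular system (\ref{eq:tri-linear-sys}) via back-substitution at cost $\mathcal{O}(d^2)$. The extragradient step costs only $\mathcal{O}(d)$. This yields (\ref{eq:update-cost}):
\begin{equation*}
  \text{Update cost} \;=\; \tilde{\mathcal{O}}\bigl(d^2\cdot m^{2/3}\epsilon^{-2/3} + d^3\cdot(m^{-1/3}\epsilon^{-2/3}+1)\bigr).
\end{equation*}

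Adding the two displays gives (\ref{eq:total-compu-cost}):
\begin{equation*}
  \text{Total} \;=\; \tilde{\mathcal{O}}\bigl((N+d^2)\cdot m^{2/3}\epsilon^{-2/3} + (Nd+d^3)\cdot(m^{-1/3}\epsilon^{-2/3}+1)\bigr).
\end{equation*}
Finally I would optimize over $m$. The first term is increasing in $m$ and the second is decreasing; setting their $\epsilon^{-2/3}$-parts equal gives $m^{2/3}\cdot 1 = m^{-1/3}\cdot d$, i.e.\ $m=\Theta(d)$. Substituting $m=\Theta(d)$ collapses both terms to $\tilde{\mathcal{O}}((N+d^2)(d+d^{2/3}\epsilon^{-2/3}))$, which is the claimed bound.

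The only non-mechanical steps in this plan are the $\tilde\mathcal{O}(1)$ root-finding for $\phi$ and the reduction of the per-iteration linear solve to $\mathcal{O}(d^2)$ via the Schur factor; both are already supplied by the lemma of \citet{lin2022explicit} cited in Section \ref{sec:imple} (the strict monotonicity of $\phi$) and by the standard fact that back-substitution on an upper-triangular $d\times d$ system costs $\mathcal{O}(d^2)$. So I do not expect any real obstacle: the proof is a one-page computation whose only decision point is the balancing choice $m=\Theta(d)$.
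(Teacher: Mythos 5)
Your proposal is correct and follows essentially the same route as the paper: it reproduces the decomposition into oracle cost (\ref{eq:compu-cost}) and update cost (\ref{eq:update-cost}) established in Section \ref{sec:imple}, sums them to get (\ref{eq:total-compu-cost}), and balances at $m=\Theta(d)$. No gaps.
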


We also present the total computational complexity of LEN-restart for SC-SC setting.
\begin{cor}[SC-SC setting]
\label{thm:LEN-SCSC-complexity}
Under the same setting as in Theorem \ref{thm:restart-LEN}, Algorithm \ref{alg:LEN-restart} implemented in the same way as Algorithm \ref{alg:LEN-imple} with $m = \Theta(d)$ finds an $\epsilon$-root with computational complexity $\tilde \gO((N+d^2) (d+ d^{2/3} \kappa^{2/3})$.
\end{cor}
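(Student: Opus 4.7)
The plan is to combine the iteration count from Theorem~\ref{thm:restart-LEN} with the per-iteration cost decomposition already established for Theorem~\ref{thm:LEN-CC-complexity}. From Theorem~\ref{thm:restart-LEN}, LEN-restart with $M = 3\rho m$ runs $S = \log_{3/2}\log_2(\nicefrac{1}{\epsilon})$ epochs, each of length $T = \gO((m\kappa)^{2/3})$, so the total number of inner iterations is $TS = \tilde{\gO}(m^{2/3}\kappa^{2/3})$ once the $\log\log(\nicefrac{1}{\epsilon})$ factor is absorbed into $\tilde{\gO}$.

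First I would split the total work into oracle cost and update cost, exactly as in Section~\ref{sec:imple}. For the oracle cost, each inner iteration evaluates $\mF$ at $\tilde{\gO}(1)$ points (the overhead coming from the univariate root search for $\phi(\gamma)$ in \eqref{eq:phi}), while $\nabla \mF$ is only re-evaluated at snapshot iterations, i.e.\ once every $m$ inner steps within a given epoch, plus one mandatory fresh Hessian at each epoch boundary (since LEN is restarted from $\vz^{(s)}$). The total number of Hessian queries is therefore at most $S + TS/m$, and the total oracle cost is $\tilde{\gO}\bigl(N\cdot TS + Nd\cdot(S + TS/m)\bigr)$. Specializing to $m = \Theta(d)$ gives $N\cdot TS = \tilde{\gO}(N\,d^{2/3}\kappa^{2/3})$, $Nd\cdot S = \tilde{\gO}(Nd)$, and $Nd\cdot TS/m = \tilde{\gO}(N\,d^{2/3}\kappa^{2/3})$, which collapses to $\tilde{\gO}\bigl(N(d + d^{2/3}\kappa^{2/3})\bigr)$.

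Next, for the update cost I invoke the Schur-based implementation of Algorithm~\ref{alg:LEN-imple}: at each snapshot one Schur factorization $\nabla \mF(\vz_{\pi(t)}) = \mQ \mU \mQ^{-1}$ is computed at cost $\gO(d^3)$, after which each inner iteration performs $\tilde{\gO}(1)$ upper-triangular back-substitutions of \eqref{eq:tri-linear-sys} at $\gO(d^2)$ each (one per step of the bisection used to solve $\phi(\gamma)=0$). Mirroring the Hessian-query bookkeeping, the total update cost is $\tilde{\gO}\bigl(d^2\cdot TS + d^3\,(S + TS/m)\bigr)$, which with $m = \Theta(d)$ simplifies to $\tilde{\gO}\bigl(d^2(d + d^{2/3}\kappa^{2/3})\bigr)$. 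Summing the oracle and update contributions yields the claimed bound $\tilde{\gO}\bigl((N+d^2)(d + d^{2/3}\kappa^{2/3})\bigr)$.

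The only subtle point is the ``$+S$'' term in the Hessian/Schur count: every restart forces a fresh Hessian regardless of whether the epoch length $T$ exceeds the snapshot gap $m$, and this is precisely what generates the additive $d$ inside the factor $(d + d^{2/3}\kappa^{2/3})$ after substituting $m = \Theta(d)$; if one ignored it and only billed $TS/m$ Hessians one would lose the linear-in-$d$ regime that dominates when $\kappa$ is small. Beyond that, the argument is a direct bookkeeping transplant of the proof of Theorem~\ref{thm:LEN-CC-complexity}: the per-iteration cost structure is identical, only the iteration budget changes from $\tilde{\gO}(m^{2/3}\epsilon^{-2/3})$ to $\tilde{\gO}(m^{2/3}\kappa^{2/3})$.
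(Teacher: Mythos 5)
Your proof is correct and follows essentially the same route as the paper: decompose work into oracle cost and update cost exactly as in Section~\ref{sec:imple}, count Hessian/Schur computations as once per snapshot plus once per restart, and substitute $m = \Theta(d)$. Your aggregated count $S + TS/m$ is identical to the paper's per-epoch count $T/m + 1$ multiplied by $S$ epochs, and your emphasis on the "$+S$" term correctly mirrors the paper's "$+1$" that yields the additive $d$ after setting $m = \Theta(d)$.
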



In both cases, our proposed algorithms improve the total computational cost of the optimal second-order methods~\citep{monteiro2012iteration,bullins2022higher,adil2022optimal,lin2022explicit} by a factor of $d^{1/3}$.

\begin{algorithm*}[t]  
\caption{Implementation of LEN $(\vz_0, T, m,M)$}  \label{alg:LEN-imple}
\begin{algorithmic}[1] 
\STATE \textbf{for} $t=0,\cdots,T-1$ \textbf{do} \\
\STATE \quad \textbf{if} $t \mod m =0$ \textbf{do} \\
\STATE \quad \quad Compute the Schur decomposition such that $  \nabla \mF(\vz_t) = \mQ \mU \mQ^{-1}$. \\
\STATE \quad \textbf{end if} \\
\STATE \quad Let $\phi(\,\cdot\,)$ defined as \ref{eq:phi} and compute $\gamma_t$ as its root by a binary search.
\STATE \quad Compute lazy cubic step $\vz_{t+1/2} = \mQ (\mU + \gamma_t \mI_d)^{-1} \mQ^{-1} \mF(\vz_t).$ \\
\STATE \quad Compute extra-gradient step $ \vz_{t+1} = \vz_t - \gamma_t^{-1} \mF(\vz_{t+1/2}) $.
\STATE \textbf{end for} \\
\STATE \textbf{return} $ \bar \vz_T = \frac{1}{\sum_{t=0}^{T-1} \gamma_t^{-1}} \sum_{t=0}^{T-1} \gamma_t^{-1} \vz_{t+1/2}$.
\end{algorithmic}
\end{algorithm*}

\begin{remark}
In the main text, we assume the use of the classical algorithm for matrix inversion/decomposition, which has a computational complexity of $\gO(d^3)$. 
The fast matrix multiplication proposed by researchers in the field of theoretical computer science only requires a complexity of
$d^{\omega}$,
where the best known
$\omega$ is currently around $2.371552$~\citep{williams2024new}. This also implies faster standard linear algebra operators including Schur decomposition and matrix inversion~\citep{demmel2007fast}.
However, the large hidden constant factors in these fast matrix multiplication algorithms
mean that the matrix dimensions necessary for these algorithms to be superior to classical algorithms are much larger than what current computers can effectively handle.
Consequently, these algorithms are not always used in practice. 
We present the computational complexity of using fast matrix operations in Appendix \ref{apx:fast-matrix}.
\end{remark}

In Appendix \ref{apx:inexact}, we 
extend our algorithms to allow inexact auxiliary CRN sub-problem solving and analyze the total complexity. 
Specifically, we design an efficient sub-procedure (Algorithm \ref{alg:line-search-eta}) to solve the CRN sub-problem to desired accuracy in only $\gO(\log \log (1/\epsilon))$ number of linear system solving. 
It tightens the $\gO(\log (1/\epsilon))$ iteration complexity in \citep{bullins2022higher,adil2022optimal}. 
Additionally,  \citep{bullins2022higher,adil2022optimal}  assume ${\sigma_{\min}}( \nabla \mF(\vz)) \ge \mu$ for some positive constant $\mu$, which makes the problem similar to strongly-convex(-strongly-concave) problems, while our analysis does not require such an assumption.

\section{Numerical Experiments}
\label{sec:exp}
We conduct our algorithms on a regularized bilinear min-max problem and fairness-aware machine learning tasks. We include EG~\citep{korpelevich1976extragradient} and NPE~\citep{monteiro2012iteration,bullins2022higher,adil2022optimal,lin2022explicit} (which is our algorithm with $m=1$) as baselines, since they are the optimal first- and second-order methods for convex-concave minimax problems, respectively. 
We run the programs on an AMD EPYC 7H12 64-Core Processor. \footnote{The source codes are available at \url{https://github.com/TrueNobility303/LEN}.}


\subsection{Regularized bilinear min-max problem}

\begin{figure}[t]
    \centering
    \begin{tabular}{c c c}
     \includegraphics[scale=0.26]{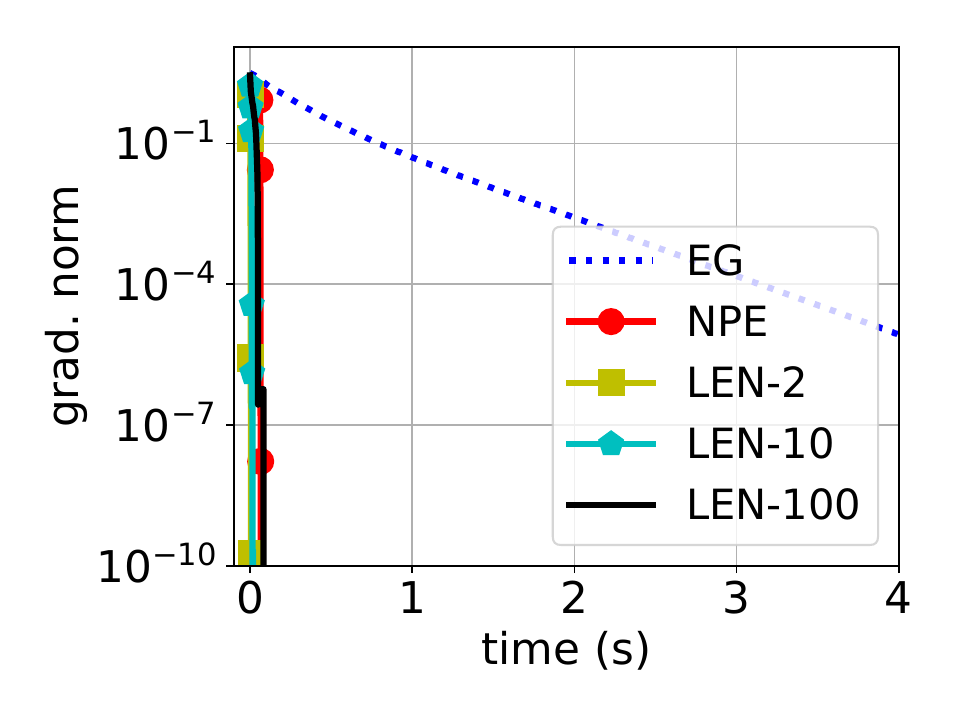}    & \includegraphics[scale=0.26]{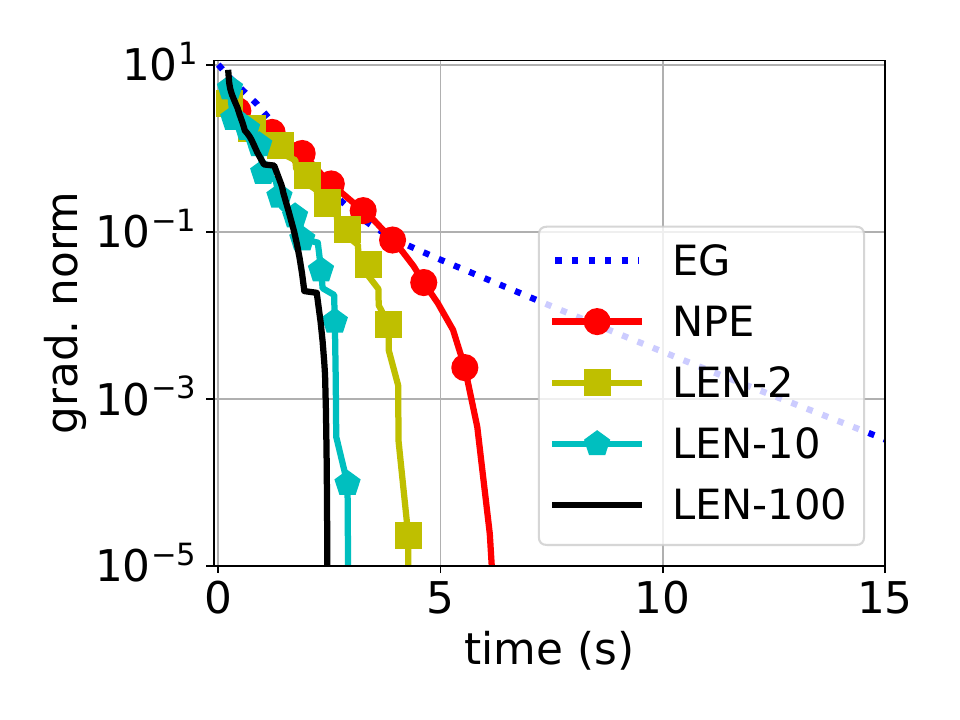} & \includegraphics[scale=0.26]{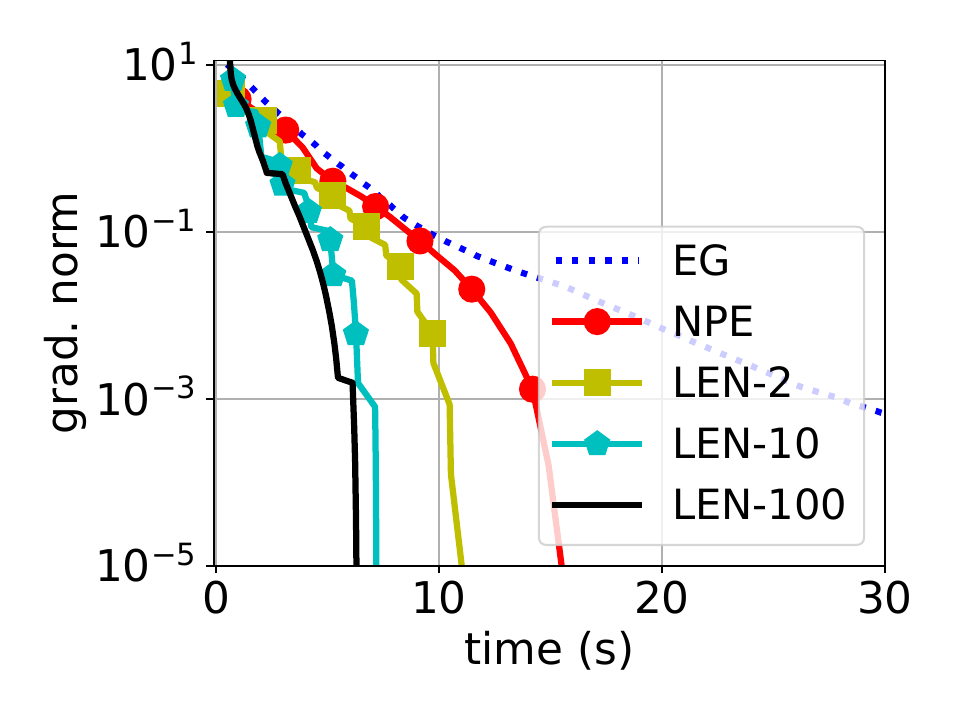}  \\
    (a) $n = 10$     &  (b) $n=100$ & (c) $n=200$ \\
    \end{tabular}
    \caption{We demonstrate running time \textit{v.s.} gradient norm $\Vert \mF(\vz) \Vert$ for Problem (\ref{eq:cubic-toy}) with different sizes: $n \in \{ 10, 100,200\}$. }
    \label{fig:toy}
\end{figure}

We first conduct numerical experiments on the cubic regularized bilinear min-max problem considered in the literature \citep{lin2022explicit,jiang2024adaptive}:
{\small
\begin{align} \label{eq:cubic-toy}
    \min_{\vx \in \sR^n} \max_{\vy \in \sR^n} f(\vx,\vy) = \frac{\rho}{6} \Vert \vx \Vert^3  + \vy^\top (\mA \vx - \vb).
\end{align}
}
\!\!The function $f(\vx,\vy)$ is convex-concave and has $\rho$-Lipschitz continuous Hessians. 
The unique saddle point $\vz^*= (\vx^*, \vy^*)$ of $f(\vx,\vy)$ can be explicitly calculated as  $\vx^* = \mA^{-1} \vb$ and $\vy^* = - \rho \Vert \vx^* \Vert^2 (\mA^\top)^{-1} \vx^* / 2$, so we can compute the distance to $\vz^*$ to measure the performance of the algorithms.
Following \citet{lin2022explicit}, we generate each element in $\vb$ as independent Rademacher variables in $\{ -1, +1\}$, set $\rho = 1/(20n)$ and the matrix 
{\small
$
    \mA = 
    \begin{bmatrix}
        1 & -1 \\
         & \ddots & \ddots \\
         & & 1 & -1 \\
         & & & 1 \\
        \end{bmatrix}.
$}

We compare our methods with the baselines on different sizes of the problem: $n \in \{10,100,200 \}$. For EG, we tune the stepsize in $ \{1, 0.1, 0.01, 0.001\}$. 
 For LEN, we vary $m$ in $\{1, 2, 10, 100\}$.
 The results of the running time against $\Vert \mF(\vz) \Vert$ are presented in Figure \ref{fig:toy}.

\subsection{Fairness-Aware Machine Learning}

We then examine our algorithm for the task of fairness-aware machine learning.
Let $ \{ \va_i, b_i,c_i\}_{i=1}^n$ be the training set, where $\va_i \in \sR^{d_x}$ denotes the features of the $i$-th sample, $b_i \in \{-1,+1\}$ is the corresponding label, and 
$c_i \in \{-1,+1 \}$ is an additional feature that is required to be protected and debiased. For example, $c_i$ can denote gender. \citet{zhang2018mitigating} proposed to solve the following minimax problem to mitigate unwanted bias of $c_i$ by adversarial learning:
\begin{align} \label{eq:fair}
    \min_{\vx \in \sR^{d_x}} \max_{y \in \sR} \frac{1}{n} \sum_{i=1}^n \ell(b_i \va_i^\top \vx) - \beta \ell(c_i y \va_i^\top \vx) + \lambda \Vert \vx \Vert^2 -\gamma y^2,
\end{align}
where $\ell$ is the logit function such that $\ell(t) = \log (1+\exp(-t))$.
We set $\lambda = \gamma =10^{-4}$ and $\beta = 0.5$. 
We conduct the experiments on datasets ``heart'' ($n=270$, $d_x = 13$)~\citep{chang2011libsvm},  ``adult'' ($n= 32,561$, $d_x = 123$)~\citep{chang2011libsvm} and ``law  school'' ($n=20,798$, $d_x= 380$)~\citep{le2022survey,liu2022partial}. For all the datasets, we choose ``gender'' as the protected feature.
For EG, we tune the stepsize in $\{0.1, 0.01, 0.001 \}$. For second-order methods (NPE and LEN), as we do not know the value of $\rho$ in advance, we view it as a hyperparameter and tune it in $\{1,10,100 \}$.
We set $m=10$ for LEN and we find that this simple choice  performs well in all the datasets we test.
We show the results of the running time against the gradient norm $\Vert \mF(\vz) \Vert$ in Figure \ref{fig:fairness}.


\begin{figure}[]
    \centering
    \begin{tabular}{c c c}
    \includegraphics[scale=0.26]{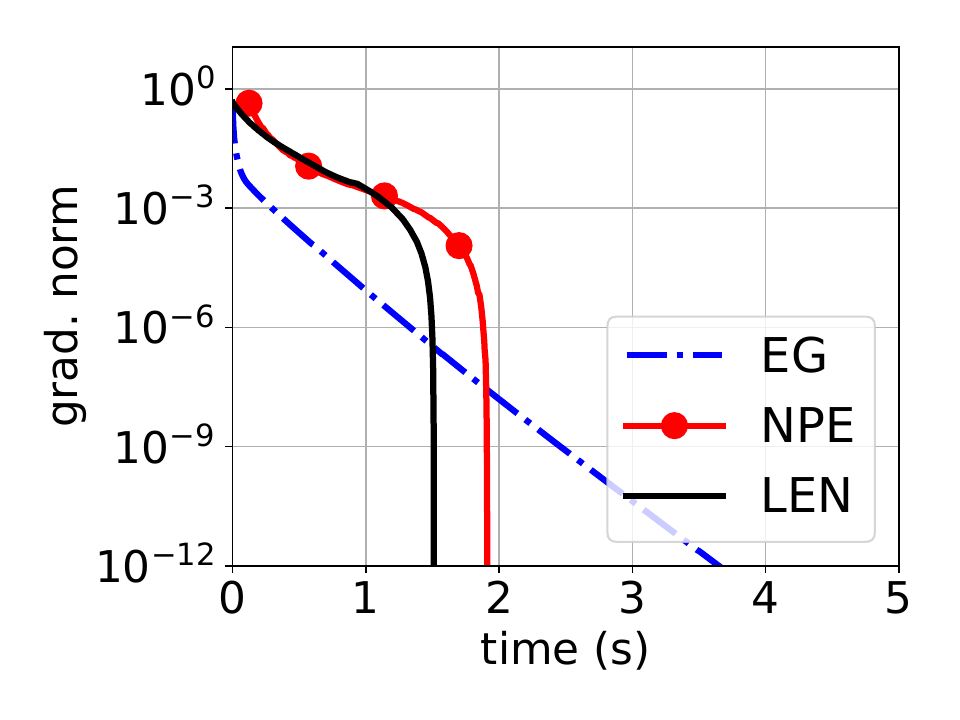} & 
    \includegraphics[scale=0.26]{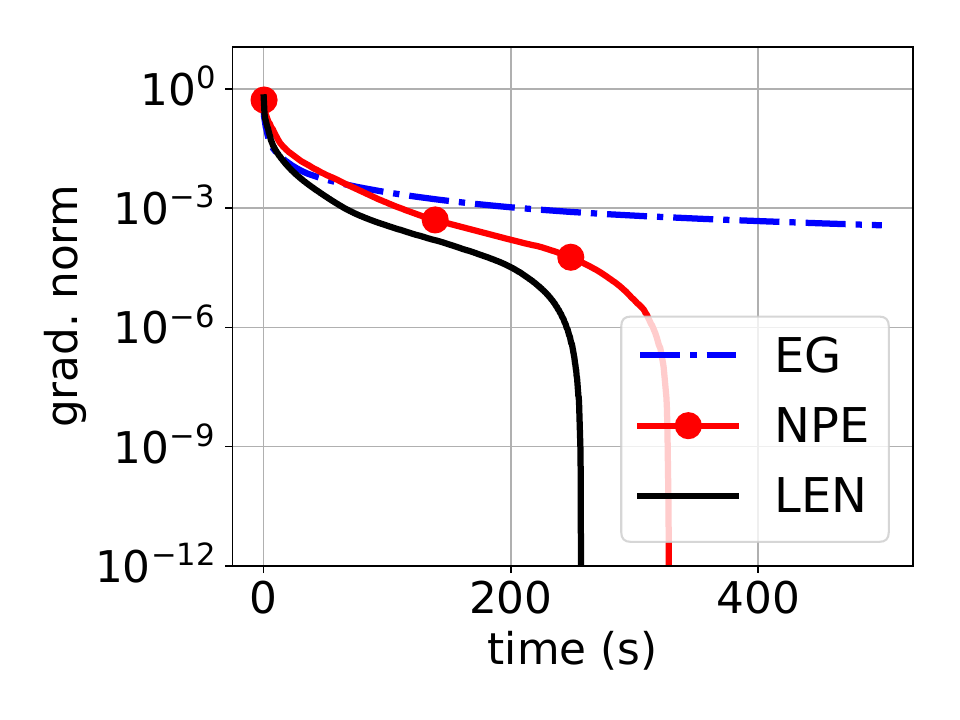} & 
    \includegraphics[scale=0.26]{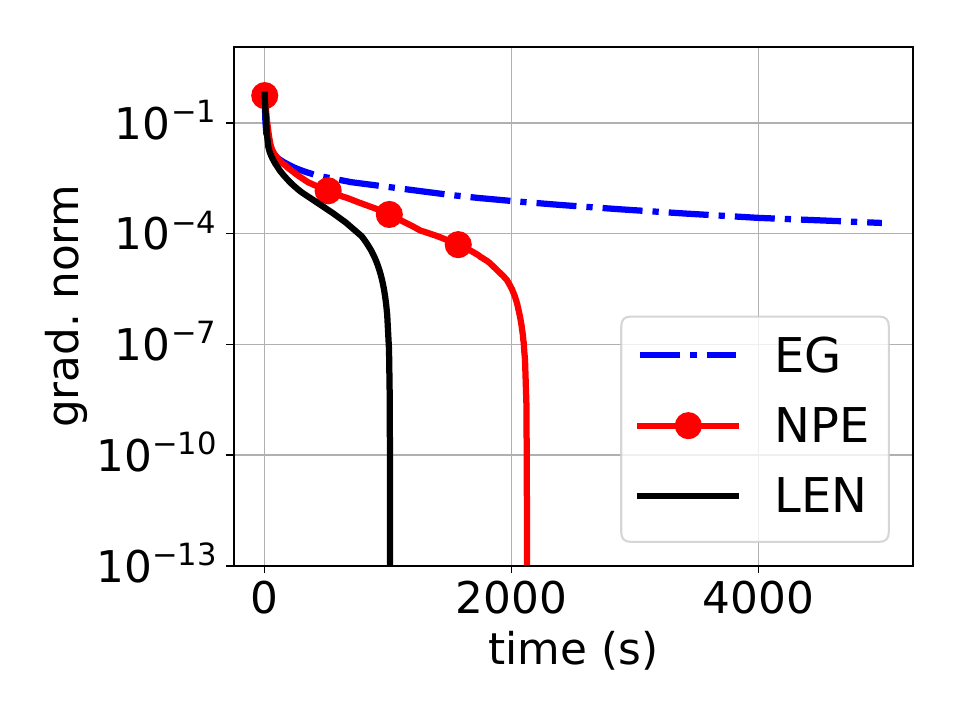} \\
    (a) heart & (b) adult & (c) law school 
    \end{tabular}
    \caption{We demonstrate running time \textit{v.s.} gradient norm $\Vert \mF(\vz) \Vert$ for fairness-aware machine learning task (Problem (\ref{eq:fair})) on datasets ``heart'', ``adult'', and ``law school''. }
    \label{fig:fairness}
\end{figure}

\section{Conclusion and future works}
\label{sec:conclu}
In this paper, we propose LEN and LEN-restart for C-C and SC-SC minimax problems, respectively. Using lazy Hessian updates, our methods improve the computational complexity of the current best-known second-order methods by a factor of $d^{1/3}$.  
Numerical experiments on both real and synthetic datasets also verify the efficiency of our method. 






For future work, it will be interesting to extend our idea to adaptive second-order methods~\citep{wang2024fully,doikov2024super,carmon2022optimal,antonakopoulos2022extra,liu2022regularized} and stochastic problems with sub-sampled Newton methods~\citep{lin2022explicit,chayti2023unified,zhou2019stochastic,tripuraneni2018stochastic,wang2019stochastic}.
Besides, our methods only focus on the convex-concave case, it is also possible to reduce the Hessian oracle for nonconvex-(strongly)-concave problems~\citep{luo2022finding,lin2020near,yang2023accelerating,zhang2022sapd+,wang2024efficient} or study structured nonconvex-nonconcave problems
~\citep{zheng2024universal,diakonikolas2021efficient,yang2020global,lee2021fast,chen2022near}.

\section*{Acknowledgments}
Lesi Chen and Jingzhao Zhang are supported by the Shanghai Qi Zhi Institute Innovation Program. 
Chengchang Liu is supported by the National Natural Science Foundation of China (624B2125). 

\bibliography{iclr2025_conference}
\bibliographystyle{iclr2025_conference}

\newpage
\appendix

\section{Some Useful Lemmas}

\begin{lem} \label{lem:gap-gap}
    Recall the definition of restricted gap function in Definition \ref{dfn:gap}. For any point $(\hat \vx, \hat \vy)$ We have that
    \begin{align*}
        {\rm Gap}(\hat \vx, \hat \vy ; \beta ) \le \max_{\vz \in \sB_{\sqrt{2} \beta (\vz^*)}} \left\{ f(\hat \vx, \vy) - f(\vx, \hat \vy) \right\}, \quad \vz = (\vx,\vy).
    \end{align*}
\end{lem}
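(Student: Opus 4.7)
The plan is to pick the two optimizing points that define $\text{Gap}$ separately and then package them into a single joint point lying in the enlarged ball. Concretely, let
\[
\vy_1 \in \argmax_{\vy \in \sB_\beta(\vy^*)} f(\hat \vx, \vy), \qquad \vx_2 \in \argmin_{\vx \in \sB_\beta(\vx^*)} f(\vx, \hat \vy),
\]
and form the joint point $\vz_1 := (\vx_2, \vy_1)$. Observe that by the very definition of the restricted gap,
\[
\mathrm{Gap}(\hat \vx, \hat \vy; \beta) = f(\hat \vx, \vy_1) - f(\vx_2, \hat \vy).
\]

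First, I would verify that $\vz_1$ lies in the joint ball $\sB_{\sqrt{2}\beta}(\vz^*)$. This is immediate from the product-norm identity: since the $\vx$ and $\vy$ components of $\vz_1 - \vz^*$ decouple under the Euclidean norm,
\[
\|\vz_1 - \vz^*\|^2 = \|\vx_2 - \vx^*\|^2 + \|\vy_1 - \vy^*\|^2 \le \beta^2 + \beta^2 = 2\beta^2,
\]
using that $\vx_2 \in \sB_\beta(\vx^*)$ and $\vy_1 \in \sB_\beta(\vy^*)$ by construction. Hence $\vz_1 \in \sB_{\sqrt{2}\beta}(\vz^*)$.

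Second, I would conclude by plugging $\vz_1$ into the RHS maximum: since the joint maximum on the right-hand side is at least the value at the feasible point $\vz_1 = (\vx_2, \vy_1)$,
\[
\mathrm{Gap}(\hat \vx, \hat \vy; \beta) = f(\hat \vx, \vy_1) - f(\vx_2, \hat \vy) \le \max_{\vz = (\vx,\vy) \in \sB_{\sqrt{2}\beta}(\vz^*)} \big\{ f(\hat \vx, \vy) - f(\vx, \hat \vy) \big\},
\]
which is exactly the desired inequality. There is no real obstacle here; the whole argument is essentially the Pythagorean relation for the product of two Euclidean balls, showing that the product ball $\sB_\beta(\vx^*) \times \sB_\beta(\vy^*)$ embeds into the joint ball $\sB_{\sqrt{2}\beta}(\vz^*)$, and so relaxing two coupled constraints on $\vx$ and $\vy$ separately into one joint norm constraint can only enlarge the feasible set.
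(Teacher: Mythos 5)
Your proof is correct and follows essentially the same route as the paper's: both rest on the inclusion $\sB_\beta(\vx^*)\times\sB_\beta(\vy^*)\subseteq\sB_{\sqrt{2}\beta}(\vz^*)$ via the Pythagorean identity, after which the bound is a simple relaxation of the constraint set. You merely spell out the argmax/argmin bookkeeping that the paper leaves implicit.
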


\begin{proof}
    By Definition \ref{dfn:gap}, we have that
    \begin{align*}
    {\rm Gap}(\hat \vx, \hat \vy; \beta) = \max_{\vx \in \sB_{\beta}(\vx^*), \vy \in \sB_{\beta}(\vy^*)} \left\{ f(\hat \vx, \vy ) -  f(\vx, \hat \vy) \right\} \le \max_{\vz \in \sB_{\sqrt{2} \beta}(\vz^*)} \left\{ f(\hat \vx, \vy ) -  f(\vx, \hat \vy) \right\}.
    \end{align*}
\end{proof}

\begin{lem}[Proposition 2.8 \citet{lin2022explicit}] \label{lem:avg}
Let
    \begin{align*}
        \bar \vx_t = \frac{1}{\sum_{i=0}^{t} \eta_i} \sum_{i=0}^{t} \eta_i \vx_i, \quad \bar \vy_t =  \frac{1}{\sum_{i=0}^{t} \eta_i} \sum_{i=0}^{t} \eta_i \vy_i.
    \end{align*}
Then under Assumption \ref{asm:prob-cc}, for any $\vz = (\vx,\vy)$, it holds that
\begin{align*}
    f(\bar \vx_t, \vy) - f(\vx , \bar \vy_t) \le \frac{1}{\sum_{i=0}^{t} \eta_i}  \sum_{i=0}^{t} \eta_i \langle \mF(\vz_i), \vz_i - \vz   \rangle. 
\end{align*}
\end{lem}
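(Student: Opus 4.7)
The plan is to reduce the bound on the average-point gap to a sum of pointwise gaps via Jensen's inequality, and then exploit the convex-concave structure of $f$ on each pointwise term. All three steps are routine, and the only technical care lies in tracking the sign convention of the GDA field $\mF$.

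First I would use convexity of $f(\cdot,\vy)$ for each fixed $\vy$, together with the fact that $\bar\vx_t$ is a convex combination of the iterates $\vx_i$ with weights $\eta_i / \sum_j \eta_j$, to obtain $f(\bar\vx_t,\vy) \le \tfrac{1}{\sum_j \eta_j}\sum_i \eta_i f(\vx_i,\vy)$ via Jensen's inequality. Symmetrically, concavity of $f(\vx,\cdot)$ yields $f(\vx,\bar\vy_t) \ge \tfrac{1}{\sum_j \eta_j}\sum_i \eta_i f(\vx,\vy_i)$. Subtracting these reduces the claim to showing, for every index $i$, the pointwise inequality $f(\vx_i,\vy) - f(\vx,\vy_i) \le \langle \mF(\vz_i), \vz_i - \vz \rangle$.

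Next, I would split the left side as $[f(\vx_i,\vy) - f(\vx_i,\vy_i)] + [f(\vx_i,\vy_i) - f(\vx,\vy_i)]$ and apply the appropriate first-order inequality to each bracket. Concavity of $f(\vx_i,\cdot)$ gives $f(\vx_i,\vy) - f(\vx_i,\vy_i) \le \langle \nabla_y f(\vx_i,\vy_i), \vy - \vy_i\rangle$, and convexity of $f(\cdot,\vy_i)$ gives $f(\vx_i,\vy_i) - f(\vx,\vy_i) \le \langle \nabla_x f(\vx_i,\vy_i), \vx_i - \vx\rangle$. Finally, I would expand $\langle \mF(\vz_i), \vz_i - \vz\rangle = \langle \nabla_x f(\vx_i,\vy_i), \vx_i - \vx\rangle + \langle -\nabla_y f(\vx_i,\vy_i), \vy_i - \vy\rangle$ using the definition of $\mF$ from Section~\ref{sec:pre}, and observe that the second term equals $\langle \nabla_y f(\vx_i,\vy_i), \vy - \vy_i\rangle$, matching precisely the sum of the two bracket bounds.

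The one place where a mistake is easy to make is the sign on $\nabla_y f$ in the definition of $\mF$: it flips the direction of the $\vy$-component, so it must be paired with $\vy_i - \vy$ (rather than $\vy - \vy_i$) in order for the concavity bound and the $\mF$-expansion to line up. Once this bookkeeping is done, summing the pointwise inequalities with weights $\eta_i$, dividing by $\sum_j \eta_j$, and combining with the two Jensen bounds immediately yields the stated inequality; no induction or extra hypothesis is needed.
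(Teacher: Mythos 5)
Your proof is correct, and the paper itself does not reprove this lemma — it simply cites Proposition 2.8 of Lin et al.\ (2022). Your argument (Jensen on the averaged points via convexity/concavity, then the pointwise first-order inequalities, then matching against the definition of $\mF$ with the sign flip on the $\vy$-block) is the standard proof of this result and is the natural re-derivation of what the paper invokes by citation.
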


\section{Proof of Lemma \ref{lem:seq}}

\begin{proof}
    We prove the result by induction. 
    
    Apparently, it is true for $m=2$, which is the induction base. 
    
Assume that it holds for $m \ge 2$. Then
\begin{align*}
    &\quad \sum_{t=1}^m \left(  \sum_{i=0}^{t-1} r_i\right)^2  \\
    &= \sum_{t=1}^{m-1} \left(  \sum_{i=0}^{t-1} r_i\right)^2 + \left( \sum_{i=0}^{m-1} r_i \right)^2 \\
    &\le \frac{m^2}{2} \sum_{t=0}^{m-1} r_t^2 + m \sum_{t=0}^{m-1} r_t^2 \\
    &\le \left( \frac{m^2 + 2m}{2} \right) \sum_{t=0}^{m-1} r_t^2 \\
    &\le \frac{(m+1)^2}{2} \sum_{t=0}^{m-1} r_t^2.
\end{align*}
\end{proof}

\section{Proof of Lemma \ref{lem:LEN}}

\begin{proof}
{   Instead of directly providing a proof for Algorithm \ref{alg:LEN}, we give the proof for the more general inexact algorithm (Algorithm \ref{alg:LEN-inexact}), which recovers Algorithm \ref{alg:LEN} if $\alpha = 1$.}

For convenience, we denote $\eta_t = 1/\gamma_t$.

For ant $\vz \in \sR^{d}$,
     we have
    \begin{align} \label{eq:EG}
    \begin{split}
&\quad \eta_t \langle  \mF(\vz_{t+1/2}), \vz_{t+1/2} - \vz \rangle \\
& =  \langle \vz_t - \vz_{t+1}, \vz_{t+1/2} - \vz \rangle \\
&= \langle \vz_t - \vz_{t+1},  \vz_{t+1} - \vz \rangle +  \langle \vz_t - \vz_{t+1}, \vz_{t+1/2} - \vz_{t+1} \rangle \\
&=\langle \vz_t - \vz_{t+1}, \vz_{t+1} - \vz \rangle +  \langle \vz_t - \vz_{t+1/2}, \vz_{t+1/2} - \vz_{t+1} \rangle + \langle \vz_{t+1/2} - \vz_{t+1}, \vz_{t+1/2} - \vz_{t+1} \rangle \\
&= \frac{1}{2} \Vert \vz_{t} - \vz \Vert^2 - \frac{1}{2} \Vert \vz_{t+1} - \vz \Vert^2 - \bcancel{\frac{1}{2} \Vert \vz_t - \vz_{t+1} \Vert^2} \\
&\quad + \bcancel{\frac{1}{2} \Vert \vz_t - \vz_{t+1} \Vert^2}  -\frac{1}{2} \Vert \vz_{t+1/2} - \vz_{t+1} \Vert^2 - \frac{1}{2} \Vert \vz_t - \vz_{t+1/2} \Vert^2 + \Vert \vz_{t+1/2} - \vz_{t+1} \Vert^2.  
 \end{split}
    \end{align}
Note that by the updates of the algorithm, we have that
\begin{align*}
    \gamma_t (\vz_t - \vz_{t+1/2}) &= \mF(\vz_t) + \nabla \mF(\vz_{\pi(t)}) (\vz_{t+1/2} - \vz_t), \\
    \gamma_t(\vz_t - \vz_{t+1}) &= \mF(\vz_{t+1/2}).
\end{align*}
It implies that
\begin{align}
\begin{split} \label{eq:tt}
    &\quad \vz_{t+1/2} - \vz_{t+1} \\
    &= \eta_t (\mF(\vz_{t+1/2})  - \mF(\vz_t)- \nabla \mF(\vz_{\pi(t)}) (\vz_{t+1/2} - \vz_t))) \\
    &= \eta_t( \mF(\vz_{t+1/2})  - \mF(\vz_t)- \nabla \mF(\vz_{t}) (\vz_{t+1/2} - \vz_t)) + \eta_t (\nabla \mF(\vz_{\pi(t)})) - \nabla \mF(\vz_t)) (\vz_{t+1/2} - \vz_t)
\end{split}
\end{align}
Note that $\nabla \mF$ is $\rho$-Lipschitz continuous.
Taking norm on both sides of (\ref{eq:tt}), we have that
\begin{align*}
    \Vert \vz_{t+1/2} - \vz_{t+1} \Vert &\le \frac{\rho \eta_t}{2}  \Vert \vz_{t+1/2} - \vz_t \Vert^2 + \rho \eta_t \Vert \vz_{\pi(t)} - \vz_t \Vert \Vert \vz_{t+1/2} - \vz_t \Vert \\
    &\le \frac{\rho}{2 M} \Vert \vz_{t+1/2} - \vz_t \Vert + \frac{\rho}{M}  \Vert \vz_{\pi(t)} - \vz_{t} \Vert,
\end{align*}
{  where we use the condition $M\|\vz_t-\vz_{t+1/2}\| \le \gamma_t $ in the last step.}

By Young's inequality, this further means
\begin{align*}
     \Vert \vz_{t+1/2} - \vz_{t+1} \Vert^2 \leq \frac{\rho^2}{2M^2}\Vert \vz_{t+1/2} - \vz_t \Vert^2 + \frac{2\rho^2}{M^2}  \Vert \vz_{\pi(t)} - \vz_{t} \Vert^2. 
\end{align*}
Plug the above inequality into the last term in (\ref{eq:EG}).
\begin{align*}
    &\quad \eta_t \langle  \mF(\vz_{t+1/2}), \vz_{t+1/2} - \vz \rangle \\
    &\le \frac{1}{2} \Vert \vz_{t} - \vz \Vert^2 - \frac{1}{2} \Vert \vz_{t+1} - \vz \Vert^2  -\frac{1}{2} \Vert \vz_{t+1/2} - \vz_{t+1} \Vert^2 \\
    &\quad - \frac{1}{2} \Vert \vz_t - \vz_{t+1/2} \Vert^2  + \frac{\rho^2}{2 M^2} \Vert \vz_t - \vz_{t+1/2} \Vert^2 + \frac{2 \rho^2}{M^2} \Vert \vz_{\pi(t)} - \vz_t \Vert^2.
\end{align*}
\end{proof}

\section{Proof of Theorem \ref{thm:LEN}} \label{apx:proof-main}

\begin{proof}

When $m=1$, the algorithm reduces to the NPE algorithm~\citep{monteiro2012iteration,bullins2022higher,adil2022optimal,lin2022explicit}. When $m \ge 2$, we use Lemma \ref{lem:seq} to bound the error that arises from lazy Hessian updates.

{Instead of directly providing a proof for Algorithm \ref{alg:LEN}, we give the proof for the more general inexact algorithm (Algorithm \ref{alg:LEN-inexact}), which recovers Algorithm \ref{alg:LEN} if $\alpha = 1$.}

Define $r_t = \Vert \vz_{t+1} - \vz_t \Vert$. 
By triangle inequality and Young's inequality, we have
\begin{align*}
    &\quad \eta_t \langle  \mF(\vz_{t+1/2}), \vz_{t+1/2} - \vz \rangle \\ 
    &\le  \frac{1}{2} \Vert \vz_{t} - \vz \Vert^2 - \frac{1}{2} \Vert \vz_{t+1} - \vz \Vert^2 - \left( \frac{1}{4} - \frac{\rho^2}{2 M^2} \right) \Vert \vz_t - \vz_{t+1/2} \Vert^2 \\
    &\quad  -  \left( \frac{1}{8} r_t^2 - \frac{2\rho^2}{M^2} \left( \sum_{i=\pi(t)}^{t-1} r_i \right)^2 \right).
\end{align*}
For any $1 \le s  \le m$.
Telescoping over $t = \pi(k) ,\cdots, \pi(k) + s - 1$, we have
\begin{align*}
    &\quad \sum_{t= \pi(k)}^{\pi(k)  +s -1} \eta_t \langle  \mF(\vz_{t+1/2}), \vz_{t+1/2} - \vz \rangle \\  
    &\le \frac{1}{2} \Vert \vz_{\pi(k)} - \vz \Vert^2 - \frac{1}{2} \Vert \vz_{\pi(k)+s}  - \vz \Vert^2 -  \left( \frac{1}{4} - \frac{\rho^2}{2 M^2} \right) \sum_{t= \pi(k)}^{\pi(k)  +s -1}  \Vert \vz_t - \vz_{t+1/2} \Vert^2 \\
    &\quad - \left( \frac{1}{8} \sum_{t=\pi(k)}^{\pi(k) + s -1} r_t^2 -  \frac{2 \rho^2}{M^2}  \sum_{t=\pi(k)+1}^{\pi(k) + s -1} \left(\sum_{i=\pi(k)}^{t-1} r_i \right)^2 \right).
\end{align*}
Applying Lemma \ref{lem:seq}, we further have
\begin{align*}
    &\quad \sum_{t= \pi(k)}^{\pi(k)  +s -1} \eta_t \langle  \mF(\vz_{t+1/2}), \vz_{t+1/2} - \vz \rangle \\   
    &\le \frac{1}{2} \Vert \vz_{\pi(k)} - \vz \Vert^2 - \frac{1}{2} \Vert \vz_{\pi(k)+s}  - \vz \Vert^2 -  \left( \frac{1}{4} - \frac{\rho^2}{2 M^2} \right) \sum_{t= \pi(k)}^{\pi(k)  +s -1}  \Vert \vz_t - \vz_{t+1/2} \Vert^2 \\
    &\quad - \left( \frac{1}{8} - \frac{\rho^2 s^2}{M^2}  \right) \sum_{t=\pi(k)}^{\pi(k) + s -1} r_t^2.
\end{align*}
Note that $s \le m $. Let $M \ge 3 \rho m$. Then
\begin{align*}
    &\quad \sum_{t= \pi(k)}^{\pi(k)  +s -1} \eta_t \langle  \mF(\vz_{t+1/2}), \vz_{t+1/2} - \vz \rangle \\
    &\le \frac{1}{2} \Vert \vz_{\pi(k)} - \vz \Vert^2 - \frac{1}{2} \Vert \vz_{\pi(k)+s}  - \vz \Vert^2 -  \frac{1}{8 }  \sum_{t= \pi(k)}^{\pi(k)  +s -1}  \Vert \vz_t - \vz_{t+1/2} \Vert^2. 
\end{align*}
Let $s = m$ and
further telescope over $k = 0, m , 2m, \cdots$. Then
\begin{align} \label{eq:tele}
    \sum_{t= 0}^{T} \eta_t \langle  \mF(\vz_{t+1/2}), \vz_{t+1/2} - \vz \rangle \le \frac{1}{2} \Vert \vz_0 - \vz \Vert^2 - \frac{1}{2} \Vert \vz_{T} - \vz \Vert^2 - \frac{1}{8} \sum_{t=0}^T \Vert \vz_t - \vz_{t+1/2} \Vert^2.
\end{align}
This inequality is the key to the convergence. It implies the following results. First, letting $\vz = \vz^*$ and using the fact that $ \langle \mF(\vz_{t+1/2}), \vz_{t+1/2} - \vz^* \rangle \ge 0$ according to monotonicity of $\mF$, we can prove the iterate is bounded
\begin{align} \label{eq:iterate-bounded}
    \Vert \vz_t - \vz^* \Vert \le \Vert \vz_0 - \vz^* \Vert, \quad {\rm and} \quad \Vert \vz_t - \vz_{t+1/2} \Vert \le 2 \Vert \vz_0 - \vz^* \Vert, \quad t =0,\cdots,T-1.
\end{align}
Then using triangle inequality, we obtain
\begin{align*}
    \Vert \vz_{t+1/2} - \vz^* \Vert \le 3 \Vert \vz_0 - \vz^* \Vert, \quad \forall t = 0,\cdots,T-1.
\end{align*}
Second, beginning with Lemma \ref{lem:gap-gap} we have that 
\begin{align} \label{eq:gap}
\begin{split}
    {\rm Gap}(\bar \vx_T, \bar \vy_T; 3\beta)
    &\le \max_{\vz \in \sB_{3 \sqrt{2} \beta}(\vz^*)} \left\{ f(\bar \vx_T, \vy ) -  f(\vx, \bar \vy_T) \right\} \\
    &\overset{(a)}{\le}\max_{\vz \in \sB_{3 \sqrt{2} \beta}(\vz^*)} \left\{  \frac{1}{\sum_{t=0}^{T} \eta_t} \sum_{t= 0}^{T} \eta_t \langle  \mF(\vz_{t+1/2}), \vz_{t+1/2} - \vz \rangle \right\} \\
    &\overset{(b)}{\le} \frac{ \max_{\vz \in \sB_{3 \sqrt{2} \beta}(\vz^*)} \{ \Vert \vz_0 - \vz \Vert^2 \}}{2 \sum_{t=0}^{T-1} \eta_t} \overset{(c)}{\le} \frac{ 16 \Vert \vz_0 - \vz^* \Vert^2 }{\sum_{t=0}^{T-1} \eta_t},
\end{split}
\end{align}
where (a) uses Lemma \ref{lem:avg}, (b) uses (\ref{eq:tele}) and (c) uses the fact that $\vz \in \sB_{3 \sqrt{2} \beta}(\vz^*)$.
Third, we can also use (\ref{eq:tele}) to lower bound $\sum_{t=0}^{T-1} \eta_t$. (\ref{eq:tele}) with $\vz = \vz^*$ implies 
\begin{align*}
    \sum_{t=0}^T \gamma_t^2 \le 4 \alpha^2 M^2 \Vert \vz_0 - \vz^* \Vert^2,
\end{align*}
{  where we use the condition $\gamma_t \le \alpha M\|\vz_t-\vz_{t+1/2}\|  $ in the last step.} Then
by Holder's inequality,
\begin{align*}
    T=  \sum_{t=0}^{T-1} \left(\eta_t\right)^{2/3} \left(\gamma_t^2\right)^{1/3} \le \left( \sum_{t=0}^{T-1} \eta_t \right)^{2/3} \left( \sum_{t=0}^{T-1} \gamma_t^2 \right)^{1/3}. 
\end{align*}
Therefore,
\begin{align} \label{eq:lower-eta}
\sum_{t=0}^{T-1} \eta_t \ge \frac{T^{3/2}}{2 \alpha M \Vert \vz_0 - \vz^* \Vert}.
\end{align}
We plug in (\ref{eq:lower-eta}) to (\ref{eq:gap}) and obtain that
\begin{align*}
    {\rm Gap}(\bar \vx_T, \bar \vy_T; 3 \beta) \le \frac{32 \alpha M \Vert \vz_0  - \vz^* \Vert^3 }{T^{3/2}}.
\end{align*}
{  
The desired theorem is the case $\alpha=1$.}
\end{proof}

\section{Proof of Theorem \ref{thm:restart-LEN}}

\begin{proof}
Using the strongly monotonicity of operator $\mF$ in (\ref{eq:tele}), we obtain that 
\begin{align*}
    \sum_{t=0}^T \mu \eta_t \Vert \vz_{t+1/2 } - \vz^* \Vert^2 \le \frac{1}{2} \Vert \vz_0 - \vz^* \Vert^2 - \frac{1}{2} \Vert \vz_{T} - \vz^* \Vert^2. 
\end{align*}
Using Jensen's inequality, for each epoch, we have
\begin{align*}
    \Vert \bar \vz_T - \vz^* \Vert^2 \le \frac{\Vert \vz_0 - \vz^* \Vert^2}{2 \mu \sum_{t=0}^{T-1} \eta_t} \le \frac{M \Vert \vz_0 - \vz^* \Vert^3}{\mu T^{3/2}} := c \Vert \vz_0 - \vz^* \Vert^2.
\end{align*}
Next, we consider the iterate $ \{\vz^{(s)} \}_{s=0}^{S-1}$. For the first epoch,
the setting of $T$ ensures $c \le 1/2$:
\begin{align*}
    \Vert \vz^{(1)} - \vz^* \Vert^2 \le \frac{1}{2} \Vert \vz_0 - \vz^* \Vert^2.
\end{align*}
Then for the second one, it is improved by
\begin{align*}
    \Vert \vz^{(2)} - \vz^* \Vert^2 \le \frac{\Vert \vz^{(1)} - \vz^* \Vert^3}{2 \Vert \vz_0 - \vz^* \Vert} \le \left( \frac{1}{2} \right)^{1 + 3/2} \Vert \vz_0 - \vz^* \Vert^2.
\end{align*}
Keep repeating this process. We can get
\begin{align*}
    \Vert \vz^{(s)} - \vz^* \Vert^2 \le  \left( \frac{1}{2} \right)^{q_s} \Vert \vz_0 - \vz^* \Vert^2,
\end{align*}
where $q_s$ satisfies the recursion
\begin{align*}
    q_{s} = 
    \begin{cases}
        1, & s = 1; \\[2mm]
        \dfrac{3}{2}q_{s-1} +1, & s \ge 2.
    \end{cases}
\end{align*}
This implies
\begin{align*}
    \Vert \vz^{(s)} - \vz^* \Vert^2 \le \left( \frac{1}{2}\right)^{\left(\dfrac{3}{2}\right)^{s-1}+1} \Vert \vz_0 - \vz^* \Vert^2.
\end{align*}
Set $m=\Theta(d)$, LEN-restart takes $\gO(d^{2/3}\kappa^{2/3}\log\log(1/\epsilon))$ oracle to $\mF(\cdot)$ and $\gO((1+d^{-1/3}\kappa^{2/3})\log\log(1/\epsilon))$ oracle to $\nabla \mF(\cdot)$. 
Under Assumption~\ref{asm:arith-cmp}, the computational complexities of the oracles is
\begin{align*}
    &\gO\left(N\cdot d^{2/3}\kappa^{2/3}\log\log(1/\epsilon) + Nd\cdot(1+d^{-1/3}\kappa^{2/3})\log\log(1/\epsilon)\right) \\
    &=\gO\left((Nd+Nd^{2/3}\kappa^{2/3})\log\log(1/\epsilon)\right).
\end{align*}

\end{proof}

\section{Proof of Corollary~\ref{thm:LEN-SCSC-complexity}}
\begin{proof}
The computational complexity of inner loop can be directly obtained by replacing $\epsilon^{-1}$ by $\kappa$ in Theorem~\ref{thm:LEN-CC-complexity} such that
\begin{align*}
 \text{ Inner Computational Complexity} =   \tilde{\gO}\left((N+d^2)\cdot(d+d^{2/3}\kappa^{2/3})\right).
\end{align*}
The iterations of outer loop is $S=\log\log(1/\epsilon)$, thus, the total computational complexity of LEN-restart is 
\begin{align*}
    S\cdot\text{ Inner Computational Complexity} = \tilde{\gO}\left((N+d^2)\cdot(d+d^{2/3}\kappa^{2/3})\right).
\end{align*}
\end{proof}

\section{Computational Complexity Using Fast Matrix Operations} \label{apx:fast-matrix}

Theoretically, one may use fast matrix operations for Schur decomposition and matrix inversion~\citep{demmel2007fast}, with a computational complexity of $d^{\omega}$, where $\omega \approx 2.371552$ is the matrix multiplication constant. In this case, the total computational complexity of Algorithm \ref{alg:LEN-imple} is
\begin{align*}
    \tilde \gO \left(\left(\frac{Nd + d^{\omega}}{m}  + d^2 + N \right) m^{2/3} \epsilon^{-2/3} \right)
\end{align*}
Setting the optimal $m$, we obtain the following complexity of Algorithm \ref{alg:LEN-imple}:
\begin{align*}
    \begin{cases}
        \tilde \gO(d^{\frac{2}{3}(\omega+1)} \epsilon^{-2/3}) ~~(\text{with } m= d^{\omega-2}) , &  N \lesssim d^{\omega-1} \\
        \tilde \gO(N^{2/3} d^{4/3} \epsilon^{-2/3}) ~~ (\text{with } m = N / d) , & d^{\omega-1} \lesssim N \lesssim d^2 \\
        \tilde \gO(N d^{2/3} \epsilon^{-2/3}) ~~ (\text{with } m=d), & d^2 \lesssim N.
    \end{cases}
\end{align*}
Our result is always better than the $\gO((N d+ d^{\omega}) \epsilon^{-2/3} )$ of existing optimal second-order methods.

\section{{The Inexact Algorithm}} \label{apx:inexact}

\begin{algorithm*}[t]  
\caption{Inexact LEN$(\vz_0, T, m,M, \alpha)$}  \label{alg:LEN-inexact}
\begin{algorithmic}[1] 
\STATE \textbf{for} $t=0,\cdots,T-1$ \textbf{do} \\
\STATE \quad Use Algorithm \ref{alg:line-search-eta} to find $(\vz_{t+1/2}, \gamma_t)$ that satisfies
\begin{align*}
    \vz_{t+1/2} = \vz_t - ( \nabla \mF({\vz_{\pi(t)}}) + \gamma_t \mI_d )^{-1} \mF(\vz_t)
\end{align*}
and {  $M\|\vz_t-\vz_{t+1/2}\| \le \gamma_t \le \alpha M \|\vz_t-\vz_{t+1/2}\|   $} for given $\alpha \ge 1$. \label{line:aux}
\\
\STATE \quad Compute extra-gradient step $ \vz_{t+1} = \vz_t -  \gamma_t^{-1} \mF(\vz_{t+1/2}) $. 
\STATE \textbf{end for} \\
\STATE \textbf{return} $ \bar \vz_T = \frac{1}{\sum_{t=0}^{T-1} \gamma_t^{-1}} \sum_{t=0}^{T-1} \gamma_t^{-1} \vz_{t+1/2}$.
\end{algorithmic}
\end{algorithm*}

Algorithm \ref{alg:LEN} requires a cubic regularized Newton (CRN) oracle (Implicit Step, (\ref{eq:LEN-update})). We provide implementation details for the CRN oracle in Section \ref{sec:imple}. One missing detail is that 
we can not obtain the exact solution to the CRN oracle in practice.
To make our result more rigorous, we analyze the inexact LEN (Algorithm \ref{alg:LEN}), which allows inexact sub-problem solving with a parameter $\alpha\ge 1$. Note that this algorithm reduces to the exact version (Algorithm \ref{alg:LEN}) when $\alpha = 1$.

Below, we present the following theorem as the inexact version of Theorem \ref{thm:LEN}.
\begin{thm} \label{thm:LEN-inexact}
    Suppose that Assumption \ref{asm:prob-lip-hes} and \ref{asm:prob-cc} hold. Let $\vz^* = (\vx^*,\vy^*)$ be a saddle point and $\beta = \Vert \vz_0 - \vz^* \Vert$. Set $M \ge 3 \rho m$. The sequence of iterates generated by Algorithm \ref{alg:LEN-inexact} is bounded $\vz_t \in \sB_{\beta}(\vz^*), ~ \vz_{t+1/2} \in \sB_{3\beta}(\vz^*) , \quad \forall t = 0,\cdots,T-1,$
    and satisfies the following ergodic convergence:
    \begin{align*}
        {\rm Gap}(\bar \vx_T, \bar \vy_T; 3\beta) \le \frac{16 \alpha M \Vert \vz_0 - \vz^* \Vert^3}{T^{3/2}}.
    \end{align*}
Let $M = 3 \rho m $ and $\alpha=2$. Algorithm \ref{alg:LEN} finds an $\epsilon$-saddle point within $\gO(m^{2/3} \epsilon^{-2/3} )$ iterations.
\end{thm}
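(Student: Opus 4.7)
The plan is to follow essentially the same three-step structure as the proof of Theorem~\ref{thm:LEN}, but carefully tracking the inexactness parameter $\alpha$ through the argument. The key observation is that Lemma~\ref{lem:LEN} already applies verbatim to the inexact iterates: its proof only uses the upper side of the stepsize condition $M\|\vz_t-\vz_{t+1/2}\|\le\gamma_t$ to bound $\eta_t$ by $1/(M\|\vz_t-\vz_{t+1/2}\|)$ when controlling $\|\vz_{t+1/2}-\vz_{t+1}\|$. Therefore, I would first invoke Lemma~\ref{lem:LEN} to obtain the per-iteration inequality
\begin{align*}
    \eta_t\langle \mF(\vz_{t+1/2}),\vz_{t+1/2}-\vz\rangle \le \tfrac{1}{2}\|\vz_t-\vz\|^2-\tfrac{1}{2}\|\vz_{t+1}-\vz\|^2 - \bigl(\tfrac{1}{4}-\tfrac{\rho^2}{2M^2}\bigr)\|\vz_t-\vz_{t+1/2}\|^2 - \tfrac{1}{8}r_t^2 + \tfrac{2\rho^2}{M^2}\|\vz_{\pi(t)}-\vz_t\|^2,
\end{align*}
where $r_t=\|\vz_{t+1}-\vz_t\|$ and I have split $\tfrac{1}{2}\|\vz_{t+1/2}-\vz_{t+1}\|^2$ into two halves using the triangle inequality in the standard way.

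Next I would telescope this bound within a block $t=\pi(k),\dots,\pi(k)+s-1$ for $s\le m$, applying Lemma~\ref{lem:seq} to control $\sum_{t}\bigl(\sum_{i=\pi(k)}^{t-1}r_i\bigr)^2$ by $\tfrac{m^2}{2}\sum_t r_t^2$. The choice $M\ge 3\rho m$ then makes both parenthesized coefficients nonnegative, giving
\begin{align*}
    \sum_{t=0}^{T-1}\eta_t\langle \mF(\vz_{t+1/2}),\vz_{t+1/2}-\vz\rangle \le \tfrac{1}{2}\|\vz_0-\vz\|^2 - \tfrac{1}{2}\|\vz_T-\vz\|^2 - \tfrac{1}{8}\sum_{t=0}^{T-1}\|\vz_t-\vz_{t+1/2}\|^2
\end{align*}
after telescoping over blocks. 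Specializing to $\vz=\vz^*$ and using monotonicity (Lemma~\ref{lem:op}) immediately yields the iterate boundedness $\vz_t\in\sB_\beta(\vz^*)$, and hence by triangle inequality $\vz_{t+1/2}\in\sB_{3\beta}(\vz^*)$.

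For the gap bound, I would combine Lemma~\ref{lem:gap-gap} and Lemma~\ref{lem:avg} to convert the telescoped inequality into
\begin{align*}
    {\rm Gap}(\bar\vx_T,\bar\vy_T;3\beta)\le \frac{16\|\vz_0-\vz^*\|^2}{\sum_{t=0}^{T-1}\eta_t}.
\end{align*}
The step where inexactness actually enters is the lower bound on $\sum_t\eta_t$: from the telescoped inequality with $\vz=\vz^*$ and the upper stepsize condition $\gamma_t\le\alpha M\|\vz_t-\vz_{t+1/2}\|$, I get $\sum_t\gamma_t^2\le 4\alpha^2 M^2\|\vz_0-\vz^*\|^2$. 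Then Hölder's inequality in the form $T=\sum_t\eta_t^{2/3}\gamma_t^{2/3}\le(\sum_t\eta_t)^{2/3}(\sum_t\gamma_t^2)^{1/3}$ gives
\begin{align*}
    \sum_{t=0}^{T-1}\eta_t \ge \frac{T^{3/2}}{2\alpha M\|\vz_0-\vz^*\|},
\end{align*}
producing the claimed $16\alpha M\|\vz_0-\vz^*\|^3/T^{3/2}$ bound. The iteration count $\gO(m^{2/3}\epsilon^{-2/3})$ with $M=3\rho m$ and $\alpha=2$ then follows by rearrangement. The only nontrivial bookkeeping is tracking that each appearance of $\gamma_t$ in the Hölder estimate picks up one factor of $\alpha$, while $\eta_t=1/\gamma_t$ does not, so the final gap scales linearly rather than quadratically in $\alpha$; I expect this factor-of-$\alpha$ accounting to be the only place where the inexact proof genuinely departs from the exact one.
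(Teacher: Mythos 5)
Your proposal reproduces the paper's own argument essentially step-for-step: the paper in fact proves Lemma~\ref{lem:LEN} and Theorem~\ref{thm:LEN} directly for the inexact Algorithm~\ref{alg:LEN-inexact} (with $\alpha=1$ recovering the exact case), and your observation that Lemma~\ref{lem:LEN} only uses the lower-side condition $M\|\vz_t-\vz_{t+1/2}\|\le\gamma_t$ while the upper-side condition $\gamma_t\le\alpha M\|\vz_t-\vz_{t+1/2}\|$ enters only in the H\"older estimate $\sum_t\gamma_t^2\le 4\alpha^2M^2\|\vz_0-\vz^*\|^2$ is exactly how the paper tracks $\alpha$. The block telescoping, Lemma~\ref{lem:seq}, the boundedness claim, and the gap-to-$\sum\eta_t$ reduction are all as in the paper; the only quibble is a factor-of-two in the stated constant ($16\alpha$ vs.\ what the derivation actually yields, $32\alpha$), but this slip is already present in the paper's theorem statement itself.
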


\begin{proof}
    See Section \ref{apx:proof-main}.
\end{proof}

The only remaining thing is to show how to compute $\gamma_t$ in the auxiliary problem (Line \ref{line:aux} in Algorithm \ref{alg:LEN-inexact}). Below, we present an efficient sub-procedure to achieve the desired goal using the 
standard Newton step. We
define the monotone operator $\mA_t: \sR^d \rightarrow \sR^d$ by 
\begin{align} \label{eq:At}
    \mA_t(\vz) = \mF(\vz_t) + \nabla \mF(\vz_{\pi(t)}) (\vz - \vz_t).
\end{align}
Then we can write down the (regularized) Newton step as
\begin{align} \label{eq:Newton-step}
\begin{split}
      \vz_{t+1/2}(\eta;\vz_t) &:= \vz_t - ( \nabla \mF(\vz_{\pi(t)}) + \eta^{-1} \mI_d )^{-1} \mF(\vz_t) \\
    &= (\mI_d + \eta \mA_t)^{-1} (\vz_t).
\end{split}
\end{align}
And the inexact condition (Line \ref{line:aux} in Algorithm \ref{alg:LEN-inexact}) is
\begin{align} \label{eq:inexact-eta}
    \frac{1}{\alpha M} \le \phi_t(\eta; \vz_t) \le \frac{1}{M},
\end{align}
where $\phi_t(\eta;\vz_t)$ is defined as $\phi_t(\eta; \vz_t):= \eta \Vert \vz_{t+1/2}(\eta; \vz_t) -  \vz_t \Vert $. 

Note that a stepsize $\eta$ that satisfies (\ref{eq:inexact-eta}) directly implies $\gamma_t = 1/ \eta$ satisfies the requirement of Line \ref{line:aux} in Algorithm \ref{alg:LEN-inexact}. Therefore, the main goal of this section is to design a sub-procedure that can determine the stepsize $\eta$ that satisfies (\ref{eq:inexact-eta}).

A similar sub-procedure without using lazy Hessian updates has been proposed in \citep{monteiro2012iteration}. Below, we show that we can use a similar sub-procedure for our algorithm.  We recall some useful lemmas in \citep{monteiro2012iteration}, which holds for any monotone operators $\mA$. Below, we state their results when $\mA = \mA_t$.

\begin{lem}[Lemma 4.3 and Lemma 4.4 \citep{monteiro2012iteration}] \label{lem:phi}
Recall the definition of $\phi_t$ right after (\ref{eq:inexact-eta}). For any $\vz \in \sR^d$, the following statements hold:
\begin{enumerate}

        \item For any $\eta>0$, we have $\phi_t(\eta; \vz)>0$.
        \item For any $ 0< \eta' \le \eta$, we have that
        \begin{align*}
            \frac{\eta}{\eta'} \phi_t(\eta'; \vz) \le \phi_t(\eta; \vz) \le \left(    \frac{\eta}{\eta'}  \right)^2 \phi_t(\eta'; \vz).
        \end{align*}
        As a corollary, $\phi_t(\eta; \vz )$ is a continuous and strictly increasing function, which converges to $0$ or $+\infty$ as $\eta$ tends to $0$ or $+\infty$, respectively.
        \item For any $0<\beta^- < \beta^+$, the set of all scalars $\eta>0$ satisfying $ \beta^- \le \phi_t(\eta;\vz) \le \beta^+$ is a closed interval $[\eta^-,\eta^+]$ such that $ \eta^+/ \eta^- \ge \sqrt{\beta^+/\beta^-} $.
    \end{enumerate}
\end{lem}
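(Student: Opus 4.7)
The plan is to fix $\vz$ and study the curve $\eta \mapsto \vw(\eta) := \vz_{t+1/2}(\eta;\vz) - \vz$, which by (\ref{eq:Newton-step}) satisfies the resolvent identity $\vw(\eta) = -\eta \mA_t(\vz + \vw(\eta))$, so that $\phi_t(\eta;\vz) = \eta \|\vw(\eta)\|$. The structural property I will lean on throughout is that $\mA_t$ is affine with Jacobian $\nabla \mF(\vz_{\pi(t)})$, which is monotone by Lemma \ref{lem:op}; hence $\mA_t$ itself is monotone and the resolvent $(\mI_d + \eta \mA_t)^{-1}$ is single-valued for every $\eta > 0$. With this in hand, claim (1) is a one-line nondegeneracy remark: $\phi_t(\eta;\vz) = 0$ would force $\vw(\eta) = 0$, hence $\mA_t(\vz) = 0$, which in the intended usage $\vz = \vz_t$ reads $\mF(\vz_t) = 0$ and means the outer algorithm has already located a saddle point.

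The real work is in claim (2). I plan to write the resolvent identities at two scales $\eta_1 < \eta_2$, namely $\vw(\eta_i)/\eta_i = -\mA_t(\vz + \vw(\eta_i))$ for $i = 1,2$, subtract them, and pair the result with $\vw(\eta_1) - \vw(\eta_2)$. Monotonicity of $\mA_t$ then yields
\[
\big\langle \vw(\eta_1)/\eta_1 - \vw(\eta_2)/\eta_2,\; \vw(\eta_1) - \vw(\eta_2) \big\rangle \le 0.
\]
After expanding, estimating the cross term by Cauchy--Schwarz, and writing $a := \|\vw(\eta_1)\|$, $b := \|\vw(\eta_2)\|$, I expect the inequality to collapse to the tidy form $(\eta_2 a - \eta_1 b)(a - b) \le 0$. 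A short case split then produces simultaneously $a \le b$ and $b/\eta_2 \le a/\eta_1$; multiplying by the appropriate $\eta_i$ delivers the two displayed bounds on $\phi_t$. Continuity, strict monotonicity, and the limits $\phi_t(\eta) \to 0$ and $\phi_t(\eta) \to +\infty$ as $\eta \to 0^+$ and $\eta \to +\infty$ respectively then follow from these two-sided bounds by squeeze.

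Claim (3) is formal once (2) is in hand: because $\phi_t(\,\cdot\,;\vz)$ is a continuous, strictly increasing bijection of $(0,\infty)$ onto itself, the preimage of $[\beta^-, \beta^+]$ is a closed interval $[\eta^-, \eta^+]$ with $\phi_t(\eta^\pm;\vz) = \beta^\pm$, and the upper bound from (2) gives $\beta^+ \le (\eta^+/\eta^-)^2 \beta^-$, i.e.\ $\eta^+/\eta^- \ge \sqrt{\beta^+/\beta^-}$. The delicate step I foresee is the monotonicity-plus-Cauchy--Schwarz juggling inside (2); extracting two directional bounds on $\phi_t$ out of a single scalar monotonicity inequality is what really makes the lemma tick, and everything else is essentially bookkeeping.
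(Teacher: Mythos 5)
The paper imports this lemma verbatim from \citet{monteiro2012iteration} (their Lemmas 4.3 and 4.4) and gives no proof of its own, so there is no in-paper argument to compare against. Your reconstruction is correct and is in essence the standard proof from that reference: the operator $\mA_t$ is monotone because its Jacobian $\nabla\mF(\vz_{\pi(t)})$ has nonnegative quadratic form under convexity–concavity (the off-diagonal blocks cancel), so the resolvent $(\mI_d + \eta\mA_t)^{-1}$ is well defined and single-valued, and pairing the resolvent identities at $\eta'\le\eta$ with the difference vector does yield, after Cauchy--Schwarz, exactly $(\eta\,\|\vw(\eta')\| - \eta'\,\|\vw(\eta)\|)\,(\|\vw(\eta')\| - \|\vw(\eta)\|)\le 0$; the case split you describe gives both $\|\vw(\eta')\|\le\|\vw(\eta)\|$ and $\eta'\|\vw(\eta)\|\le\eta\|\vw(\eta')\|$, which are precisely the two displayed bounds after multiplying by the appropriate $\eta$. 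The only point worth flagging is that your claim~(1) tacitly assumes $\mA_t(\vz)\neq 0$ (equivalently, in the algorithm's usage, that the current iterate is not already a root), which is the natural standing assumption in \citet{monteiro2012iteration} and is needed for strict positivity and hence for the strictness of the monotonicity corollary; you acknowledge this, and it is the intended reading.
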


Algorithm \ref{alg:line-search-eta} presents our sub-procedure to output the tuple $(\vz_{t+1/2}, \gamma_t)$ satisfying (\ref{eq:inexact-eta}). Similar to \citep{monteiro2012iteration}, the procedure consists of two stages. The first one is a bracketing stage, which either outputs an acceptable solution or an initial interval $[c_t^-,c_t^+]$ that contains all the $\eta$ satisfying (\ref{eq:inexact-eta}). The second one is a bisection stage, which uses binary search in the logarithmic scale to find a stepsize $\eta$ satisfying (\ref{eq:inexact-eta}). Note that the log-scale binary search would finally lead to a $\gO(\log \log (1/\epsilon))$ iteration complexity, which improves the $\gO(\log (1/\epsilon))$ iteration complexity using naive binary search in \citep{adil2022optimal,bullins2022higher}.


\begin{algorithm*}[t]  
\caption{Bracketing/Bisection Procedure$(\mA_t, \vz_t, M, \alpha, \eta_t^0)$}  \label{alg:line-search-eta}
\begin{algorithmic}[1] 
\STATE \textbf{(Bracketing Stage)} Compute $\vz_{t+1/2}^0 = (\mI_d + \eta_t^0 \mA_t)^{-1} (\vz_t)$ with one Newton step. \\
\quad (1a) \textbf{if} $\eta_t^0 \Vert \vz_{t+1/2}^0 - \vz_t \Vert \in (\frac{1}{\alpha M}, \frac{1}{M})$, \textbf{then} let $\vz_{t+1/2} = \vz_{t+1/2}^0, \eta_t = \eta_t^0$ and \textbf{go to} Line \ref{line:return}. \\
\quad (1b) \textbf{if} $\eta_t^0 \Vert \vz_{t+1/2}^0 - \vz_t \Vert < \frac{1}{\alpha M}$, \textbf{then} set $c_t^- = \eta_t^0$ and $c_t^+ = \frac{1}{M \Vert \vz_{t+1/2}^0 - \vz_t \Vert }$; \\
\quad (1c) \textbf{if} $\eta_0^t \Vert \vz_{t+1/2}^0 - \vz_t \Vert > \frac{1}{M} $, \textbf{then} set $c_t^- = \frac{1}{\alpha M \Vert \vz_{t+1/2}^0 - \vz_t \Vert } $ and $c_t^+ = \eta_t^0$; \\
\STATE \textbf{(Bisection Stage)} \\
\quad (2a) set $\eta_t = \sqrt{c_t^- c_t^+}$ and compute $\vz_{t+1/2} = (\mI_d + \eta_t \mA_t)^{-1} (\vz_t)$ with one Newton step; \\
\quad (2b) \textbf{if} $\eta_t \Vert \vz_{t+1/2} - \vz_t \Vert \in (\frac{1}{\alpha M}, \frac{1}{M})$, \textbf{then} \textbf{go to} Line \ref{line:return}; \\
\quad (2c) \textbf{if}  $\eta_t \Vert \vz_{t+1/2} - \vz_t \Vert > \frac{1}{M}$, \textbf{then} set $c_t^+ = \eta_t$; \textbf{else} set $c_t^- = \eta_t$; \\
\quad (2d) \textbf{go to} step (2a). \\
\STATE \textbf{return} $(\vz_{t+1/2}, \gamma_t)$ that meets the requirement of Line \ref{line:aux} in Algorithm \ref{alg:LEN-inexact}, where $\gamma_t  =1/\eta_t$ \label{line:return}.
\end{algorithmic}
\end{algorithm*}

Our first result of Algorithm \ref{alg:line-search-eta} is the correctness of the bracketing stage, stated as follows.

\begin{lem}
Let $ [\eta_t^-,\eta_t^+]$ be the interval that contains all the stepsizes satisfying (\ref{eq:inexact-eta}). Compute $\vz_{t+1/2}^0 = (\mI_d + \eta_t^0 \mA_t)^{-1} (\vz_t)$ with one Newton step as Algorithm \ref{alg:line-search-eta}. The following statements hold:
\begin{enumerate}
    \item if $\eta_t^0 \Vert \vz_{t+1/2}^0 - \vz_t \Vert < \frac{1}{\alpha M}$, then $\eta_t^0 < \eta_t^-$ and  $\eta_t^+ \le \frac{1}{M \Vert \vz_{t+1/2}^0 - \vz_t \Vert}$;
    \item if $\eta_t^0 \Vert \vz_{t+1/2}^0 - \vz_t \Vert > \frac{1}{M}$, then $\eta_t^+ < \eta_t^0$ and $\frac{1}{\alpha M \Vert \vz_{t+1/2}^0 - \vz_t \Vert} \le \eta_t^-$.
\end{enumerate}
\end{lem}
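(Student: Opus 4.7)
The strategy will rely entirely on the properties of the auxiliary function $\phi_t(\eta;\vz_t) = \eta \Vert \vz_{t+1/2}(\eta;\vz_t) - \vz_t \Vert$ recorded in Lemma \ref{lem:phi}. By the definition preceding (\ref{eq:inexact-eta}), the bracket $[\eta_t^-,\eta_t^+]$ is exactly the preimage of $[1/(\alpha M),1/M]$ under $\phi_t$, with $\phi_t(\eta_t^-;\vz_t) = 1/(\alpha M)$ and $\phi_t(\eta_t^+;\vz_t) = 1/M$. The plan is therefore to first locate $\eta_t^0$ relative to this bracket using strict monotonicity of $\phi_t$, and then turn the already-computed quantity $\Vert \vz_{t+1/2}^0 - \vz_t \Vert$ into a quantitative bound on the far endpoint using the linear scaling estimate from part 2 of Lemma \ref{lem:phi}.

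For statement 1, the hypothesis $\phi_t(\eta_t^0;\vz_t) < 1/(\alpha M) = \phi_t(\eta_t^-;\vz_t)$ combined with strict monotonicity immediately yields $\eta_t^0 < \eta_t^-$. To bound $\eta_t^+$ from above, I will invoke the lower bound $\phi_t(\eta;\vz_t) \ge (\eta/\eta')\phi_t(\eta';\vz_t)$ from Lemma \ref{lem:phi} with $\eta' = \eta_t^0$ and $\eta = \eta_t^+ \ge \eta_t^0$, substitute $\phi_t(\eta_t^0;\vz_t) = \eta_t^0 \Vert \vz_{t+1/2}^0 - \vz_t \Vert$ and $\phi_t(\eta_t^+;\vz_t) = 1/M$, cancel the common factor of $\eta_t^0$, and rearrange.

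Statement 2 is the mirror image. Strict monotonicity applied to $\phi_t(\eta_t^0;\vz_t) > 1/M = \phi_t(\eta_t^+;\vz_t)$ gives $\eta_t^+ < \eta_t^0$ at once. For the lower bound on $\eta_t^-$ I will reuse the same linear scaling inequality, now with $\eta' = \eta_t^-$ and $\eta = \eta_t^0 \ge \eta_t^-$, together with $\phi_t(\eta_t^-;\vz_t) = 1/(\alpha M)$; after substituting $\phi_t(\eta_t^0;\vz_t) = \eta_t^0 \Vert \vz_{t+1/2}^0 - \vz_t \Vert$ and dividing by $\eta_t^0$, the inequality solves directly for $\eta_t^-$.

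There is no serious obstacle, since the whole argument amounts to plugging the correct triples $(\eta',\eta,\vz_t)$ into the one-sided scaling bounds of Lemma \ref{lem:phi}. The only point requiring a bit of care is to select the linear lower bound rather than the quadratic upper bound from part 2 of that lemma: the quadratic side would produce a $\sqrt{\cdot}$ term and fail to yield the clean endpoint ratio $c_t^+/c_t^-$ that later drives the $\gO(\log\log(1/\epsilon))$ iteration count of the bisection stage in Algorithm \ref{alg:line-search-eta}.
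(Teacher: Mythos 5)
Your proposal is correct and matches the paper's proof essentially line for line: both arguments derive the endpoint location from strict monotonicity of $\phi_t$ and then obtain the quantitative bound on the far endpoint from the linear lower-bound branch of part 2 of Lemma~\ref{lem:phi}, applied with the same choice of pivot $\eta' = \eta_t^0$ (resp.\ $\eta' = \eta_t^-$). Your observation about deliberately selecting the linear rather than quadratic side is a nice explicit remark, but it is the same computation the paper performs implicitly.
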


\begin{proof}
    We only prove the first claim since the proof of the second claim follows in a similar manner.

Recall the definition of $\phi_t$ right after (\ref{eq:inexact-eta}). The condition $\eta_t^0 \Vert \vz_{t+1/2}^0 - \vz_t \Vert < \frac{1}{\alpha M}$ is equivalent to $ \phi_t(\eta_{t}^0; \vz_t) < \phi_t(\eta_t^-; \vz_t)$. Firstly. the fact that $\phi_t(\eta_t^-;\vz_t)$ is a strictly increasing function according to the second statement in Lemma \ref{lem:phi}, we know that $\eta_t^0 < \eta_t^-$.

Secondly, using the inequality in the second statement of Lemma \ref{lem:phi}, we know that
\begin{align*}
    \eta_t^+ \Vert \vz_{t+1/2}^0 - \vz_t \Vert = \frac{\eta_t^+}{\eta_t^0} \phi_t(\eta_t^0; \vz_t) \le \phi_t(\eta_t^+; \vz_t) = \frac{1}{M},
\end{align*}
which implies $\eta_t^+ \le \frac{1}{M \Vert \vz_{t+1/2}^0 - \vz_t \Vert}$ by rearranging.

\end{proof}

Therefore, the bracketing stage can always output an interval that contains the acceptable stepsizes $\eta$ satisfying (\ref{eq:inexact-eta}). Given such a valid initial interval, the bisection stage always find an acceptable stepsize, stated as follows.

\begin{lem} \label{lem:comp-bisec}
Consider Algorithm \ref{alg:line-search-eta}. If the bracketing stage outputs an interval $[c_t^-,c_t^+]$ containing all the stepsizes $\eta$ satisfying (\ref{eq:inexact-eta}), which is then input to the bisection stage, then the number of Newton step during the bisection stage is bounded by $ 1 + \log (\log (h_t) / \log \alpha ))$, where 
\begin{align} \label{eq:ht}
    h_t = \max \left\{ \frac{1}{\eta_t^0 M  \Vert \vz_{t+1/2}^{0} - \vz_t \Vert }, ~ \alpha M \eta_t^0 \Vert \vz_{t+1/2}^{0} - \vz_t \Vert   \right\}
\end{align}
is the maximal ratio of $c_t^+ / c_t^-$.
\end{lem}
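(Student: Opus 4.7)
}
The plan is to track two simple quantities along the bisection: the ratio $R_k := c_t^+/c_t^-$ of the current bracket after $k$ inner Newton steps, and the fact that the ``acceptable'' interval $[\eta_t^-,\eta_t^+]$ from Lemma~\ref{lem:phi}(3) always stays contained in the bracket. The first quantity shrinks by a square root at each step (that is the key doubly-logarithmic gain), while the second quantity gives a non-trivial lower bound on $R_k$ coming from the $\sqrt{\alpha}$ gap in Lemma~\ref{lem:phi}(3). These two estimates collide at the announced iteration count.

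First, I would argue by induction that after every bisection step the current bracket $[c_t^-,c_t^+]$ contains $[\eta_t^-,\eta_t^+]$. The base case is the assumption on the bracketing stage. For the inductive step, in branch (2c) the current midpoint $\eta_t$ satisfies $\phi_t(\eta_t;\vz_t) = \eta_t\|\vz_{t+1/2}-\vz_t\| > 1/M$, so by strict monotonicity of $\phi_t$ (Lemma~\ref{lem:phi}(2)) we get $\eta_t > \eta_t^+$; the symmetric argument gives $\eta_t < \eta_t^-$ in the other branch. In either case the new bracket still sandwiches $[\eta_t^-,\eta_t^+]$.

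Next, since step (2a) sets $\eta_t=\sqrt{c_t^-c_t^+}$ and exactly one of $c_t^\pm$ is replaced by $\eta_t$ in step (2c), the new bracket ratio is $\sqrt{R_k}$. Starting from $R_0\le h_t$ by the bracketing-stage bound, induction gives $R_k \le h_t^{1/2^k}$. On the other hand, the containment above together with Lemma~\ref{lem:phi}(3) applied with $\beta^-=1/(\alpha M)$, $\beta^+=1/M$ yields
\begin{equation*}
R_k \;\ge\; \eta_t^+/\eta_t^- \;\ge\; \sqrt{\alpha}.
\end{equation*}
Hence as soon as $h_t^{1/2^k}\le \sqrt{\alpha}$, which happens at $k = 1+\log(\log h_t/\log \alpha)$, we are forced to have $R_k=\sqrt{\alpha}$ and moreover $c_t^-=\eta_t^-$, $c_t^+=\eta_t^+$. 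The next midpoint $\sqrt{c_t^-c_t^+}$ is then strictly interior to $(\eta_t^-,\eta_t^+)$, so the termination test in (2b) triggers and the procedure returns.

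The main obstacle I anticipate is pinning down that the last midpoint indeed lands in the \emph{open} interval $(1/(\alpha M),1/M)$ rather than on its boundary, since the stopping condition in (2b) is strict. The fix is exactly the observation above: $R_k=\sqrt{\alpha}>1$ forces $c_t^-<c_t^+$, hence $c_t^-<\sqrt{c_t^-c_t^+}<c_t^+$, and combined with $[c_t^-,c_t^+]=[\eta_t^-,\eta_t^+]$ this places $\phi_t$ at the midpoint strictly between $1/(\alpha M)$ and $1/M$. Everything else is bookkeeping of the doubly exponential shrinkage $R_k\le h_t^{1/2^k}$.
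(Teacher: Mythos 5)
Your proof is correct and follows essentially the same route as the paper's three-sentence proof: track the bracket ratio $c_t^+/c_t^-$, which halves in log-scale at each bisection step, against the lower bound $\sqrt{\alpha}$ supplied by Lemma~\ref{lem:phi}(3). The two details you fill in---the explicit induction showing $[\eta_t^-,\eta_t^+]$ stays sandwiched inside $[c_t^-,c_t^+]$, and the observation that the final midpoint lands strictly inside the open interval so the strict stopping test in step (2b) actually fires---are worthwhile refinements the paper leaves implicit.
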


\begin{proof}
   After $j$ steps of bisection iterations, we have that $ \log \frac{c_t^+}{c_t^-}  = \frac{1}{2^j} \log h_t $.  In view of  the third statement in Lemma \ref{lem:phi}, we know that $c_t^+/  c_t^- \ge \sqrt{\alpha}$. These two inequalities immediately imply that the bisection stage would terminates in $ j \le 1 + \log (\log (h_t) / \log \alpha )) $ iterations.
\end{proof}

Our goal from now on would be giving a uniform upper bound of $h_t$ all for $t$, which can imply the total complexity of our algorithm. From the definition of $h_t$ in (\ref{eq:ht}), we need to give both lower and upper bounds of $\eta_t^0 \Vert \vz_{t+1/2}^0 - \vz_t \Vert$. We recall some technical lemmas in \citep{monteiro2012iteration}.

\begin{lem}[Proposition 4.5 \citet{monteiro2012iteration}] \label{lem:A-nonexpansive}
Let $\mA: \sR^d \rightarrow \sR^d$ be a monotone operator. For a point $\vz^* \in \sR^d$ such that $\mA(\vz^*) = 0$, for any $\eta>0$ and $\vz \in \sR^d$ it holds that
\begin{align*}
    \max \left\{ \Vert (\mI_d + \eta \mA)^{-1} \vz -  \vz^* \Vert,  \Vert (\mI_d + \eta \mA)^{-1} \vz -  \vz \Vert   \right\} \le \Vert \vz - \vz^* \Vert.
\end{align*}
\end{lem}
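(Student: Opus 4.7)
The plan is to exploit the identity characterising the resolvent. Let $\vu := (\mI_d + \eta \mA)^{-1} \vz$, which by definition of the resolvent means $\vu + \eta \mA(\vu) = \vz$, or equivalently
\begin{align*}
    \vz - \vu = \eta \mA(\vu).
\end{align*}
Since $\eta > 0$ and $\mA(\vz^*) = \vzero$, monotonicity of $\mA$ applied to the pair $(\vu, \vz^*)$ yields
\begin{align*}
    \langle \mA(\vu) - \mA(\vz^*), \vu - \vz^* \rangle = \langle \mA(\vu), \vu - \vz^* \rangle \ge 0,
\end{align*}
and multiplying by $\eta$ and using the identity above gives the key inequality $\langle \vz - \vu, \vu - \vz^* \rangle \ge 0$.

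With this inequality in hand, both target bounds drop out of one Pythagorean expansion. Writing $\vz - \vz^* = (\vz - \vu) + (\vu - \vz^*)$,
\begin{align*}
    \Vert \vz - \vz^* \Vert^2 = \Vert \vz - \vu \Vert^2 + 2 \langle \vz - \vu, \vu - \vz^* \rangle + \Vert \vu - \vz^* \Vert^2.
\end{align*}
The middle term is nonnegative by the key inequality, and the two squared norms on the right are individually nonnegative, so dropping either one gives $\Vert \vu - \vz^* \Vert \le \Vert \vz - \vz^* \Vert$ and $\Vert \vu - \vz \Vert \le \Vert \vz - \vz^* \Vert$, which together yield the maximum bound in the statement.

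There is no real obstacle here: the whole argument rests on the standard observation that the resolvent of a monotone operator is firmly nonexpansive, and the only ingredient needed is monotonicity applied at the point $\vz^*$ where $\mA$ vanishes. The slightly subtle piece is recognising that both halves of the $\max$ follow from the same expansion, rather than needing separate arguments. No Lipschitz or single-valuedness assumption on $\mA$ is required beyond the existence of the resolvent value used in the statement.
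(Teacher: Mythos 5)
Your proof is correct, and it is the standard argument. The paper does not prove this lemma itself — it simply cites Proposition 4.5 of \citet{monteiro2012iteration} — but the argument you give is precisely the canonical one behind that result: write $\vu = (\mI_d + \eta\mA)^{-1}\vz$, observe $\vz - \vu = \eta\mA(\vu)$, invoke monotonicity at the pair $(\vu,\vz^*)$ together with $\mA(\vz^*)=\vzero$ to get $\langle \vz-\vu,\,\vu-\vz^*\rangle\ge 0$, and then read both halves of the $\max$ off a single Pythagorean expansion of $\|\vz-\vz^*\|^2$. One small presentational point worth keeping in mind: for a general (possibly set-valued) monotone operator, one would phrase the key step as $\frac{1}{\eta}(\vz-\vu)\in\mA(\vu)$ and apply monotonicity to this specific selection; since the paper declares $\mA:\sR^d\to\sR^d$ single-valued and takes the resolvent's existence as given, your formulation is fine as stated.
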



  

From now on, we will fix all the $\eta_t^0$ in all the iterations such that $\eta_t^0 = \bar \eta $ and analyze Algorithm \ref{alg:LEN-inexact}. The following lemma shows a uniform upper bound of  $ \Vert \vz_{t+1/2}^0 - \vz_t \Vert$.

\begin{lem}[Upper bound of  $ \Vert \vz_{t+1/2}^0 - \vz_t \Vert$] \label{lem:sub-upper}
Suppose that Assumption \ref{asm:prob-lip-hes} and \ref{asm:prob-cc} hold. Let $\vz^* = (\vx^*,\vy^*)$ be a saddle point. Set $M = 3 \rho m$ as in Theorem \ref{thm:LEN}. For all the iterations of Algorithm \ref{alg:LEN-inexact}, it holds that
\begin{align} \label{eq:upper-bound}
     \Vert \vz_{t+1/2}^0 - \vz_t \Vert \le   \Vert \vz_0 - \vz^* \Vert + \frac{5  \bar \eta \rho}{2} \Vert \vz_0 - \vz^* \Vert^2.
\end{align}
\end{lem}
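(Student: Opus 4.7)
The plan is to introduce the auxiliary resolvent of the true operator $\mF$, namely $\tilde \vz := (\mI_d + \bar \eta \mF)^{-1}(\vz_t)$, and split
\[
\vz_{t+1/2}^0 - \vz_t = \underbrace{(\vz_{t+1/2}^0 - \tilde \vz)}_{\text{resolvent-comparison error}} + \underbrace{(\tilde \vz - \vz_t)}_{\text{proximal step on }\mF}.
\]
The second piece is easy: since $\mF(\vz^*)=\vzero$ and $\mF$ is monotone, Lemma~\ref{lem:A-nonexpansive} gives $\|\tilde \vz - \vz_t\| \le \|\vz_t-\vz^*\|$, and Theorem~\ref{thm:LEN-inexact} gives $\|\vz_t-\vz^*\| \le \|\vz_0 - \vz^*\|$, producing the $\|\vz_0-\vz^*\|$ term in \eqref{eq:upper-bound}.

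For the first piece, I would use the defining relations
\[
\vz_{t+1/2}^0 + \bar\eta\,\mA_t(\vz_{t+1/2}^0) = \vz_t, \qquad \tilde \vz + \bar \eta\, \mF(\tilde \vz) = \vz_t,
\]
subtract, add and subtract $\bar\eta \mA_t(\tilde \vz)$, and take the inner product with $\vz_{t+1/2}^0 - \tilde \vz$. The monotonicity of $\mA_t$ (its Jacobian is $\nabla \mF(\vz_{\pi(t)})$, which has positive-semidefinite symmetric part by Lemma~\ref{lem:op}) kills the middle term and, after Cauchy--Schwarz, yields
\[
\|\vz_{t+1/2}^0 - \tilde \vz\| \le \bar \eta\,\|\mA_t(\tilde \vz) - \mF(\tilde \vz)\|.
\]

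The remaining task is to bound $\|\mA_t(\tilde \vz) - \mF(\tilde \vz)\|$ using Assumption~\ref{asm:prob-lip-hes}. Writing
\[
\mA_t(\tilde \vz) - \mF(\tilde \vz) = \bigl[\mF(\vz_t) + \nabla \mF(\vz_t)(\tilde \vz - \vz_t) - \mF(\tilde \vz)\bigr] + \bigl[\nabla \mF(\vz_{\pi(t)}) - \nabla \mF(\vz_t)\bigr](\tilde \vz - \vz_t),
\]
the first bracket is a Taylor remainder bounded by $\tfrac{\rho}{2}\|\tilde \vz - \vz_t\|^2$, and the second is bounded by $\rho\|\vz_{\pi(t)}-\vz_t\|\,\|\tilde \vz - \vz_t\|$. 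Using $\|\tilde \vz - \vz_t\| \le \|\vz_0 - \vz^*\|$ (as above) and the triangle inequality with the iterate bounds from Theorem~\ref{thm:LEN-inexact} to get $\|\vz_{\pi(t)}-\vz_t\| \le 2\|\vz_0-\vz^*\|$, the two pieces combine to $\tfrac{5\rho}{2}\|\vz_0-\vz^*\|^2$, giving the $\tfrac{5\bar\eta\rho}{2}\|\vz_0-\vz^*\|^2$ contribution.

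The only subtle point is that the argument uses that the iterates $\vz_t, \vz_{\pi(t)}$ already lie in $\sB_\beta(\vz^*)$ with $\beta=\|\vz_0-\vz^*\|$; this is supplied by Theorem~\ref{thm:LEN-inexact}, which was proved independently of the bracketing/bisection subprocedure (it only uses the acceptance condition \eqref{eq:inexact-eta} on the accepted $\gamma_t$). Nothing in the chain above requires the probe $\vz_{t+1/2}^0$ itself to satisfy \eqref{eq:inexact-eta}, so the bound applies uniformly across all calls of Algorithm~\ref{alg:line-search-eta} with the fixed choice $\eta_t^0 = \bar\eta$.
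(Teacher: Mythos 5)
Your proof is correct, but it takes a genuinely different route from the paper's. The paper introduces $r_t := \mF(\vz^*)-\mA_t(\vz^*)$ and the shifted operator $\tilde\mA_t := \mA_t + r_t$, which vanishes at $\vz^*$; it then uses the identity $(\mI_d+\bar\eta\tilde\mA_t)^{-1}(\vz_t+\bar\eta r_t) = (\mI_d+\bar\eta\mA_t)^{-1}(\vz_t)$ and splits into a term controlled by Lemma~\ref{lem:A-nonexpansive} (applied to $\tilde\mA_t$) and a term controlled by the $1$-Lipschitz nonexpansiveness of the resolvent in its \emph{argument}, so that $\|\vz_{t+1/2}^0-\vz_t\|\le\|\vz_t-\vz^*\|+\bar\eta\|r_t\|$ with the operator discrepancy measured at the saddle point $\vz^*$. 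You instead introduce the resolvent of the \emph{true} operator, $\tilde\vz=(\mI_d+\bar\eta\mF)^{-1}(\vz_t)$, use Lemma~\ref{lem:A-nonexpansive} on $\mF$ for $\|\tilde\vz-\vz_t\|$, and compare the two resolvents $(\mI_d+\bar\eta\mA_t)^{-1}$ and $(\mI_d+\bar\eta\mF)^{-1}$ at the same input via a direct monotonicity (inner-product) argument, so that the operator discrepancy is measured at $\tilde\vz$ rather than at $\vz^*$. Both anchor points satisfy $\|\cdot-\vz_t\|\le\|\vz_0-\vz^*\|$ (by (\ref{eq:iterate-bounded}) and Lemma~\ref{lem:A-nonexpansive}, respectively), so the same Taylor-remainder and lazy-Hessian estimates recover exactly the constant $\tfrac{5}{2}$. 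Your route is somewhat more self-contained — the resolvent-difference bound falls straight out of monotonicity of $\mA_t$ (which, as you note, is inherited from the PSD symmetric part of $\nabla\mF$) and requires neither the operator-shift trick nor an external nonexpansiveness reference — while the paper's version is a tad shorter because it reuses Lemma~\ref{lem:A-nonexpansive} twice without needing the inner-product manipulation. Your remark about what is and is not needed from Theorem~\ref{thm:LEN-inexact} is also accurate: only the accepted iterates' bound $\|\vz_t-\vz^*\|\le\|\vz_0-\vz^*\|$ is used, and it was derived independently of the probe points $\vz_{t+1/2}^0$.
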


\begin{proof}
    Let $r_t:= \mF(\vz^*) - \mA_t(\vz^*)$ and define the operator $\tilde \mA_t$ as $\tilde \mA_t(\vz) = \mA_t(\vz) + r_t$. From the definition of $\tilde \mA_t$ we know that all any $\eta>0$ and $\vz \in \sR^d$ we have that
    \begin{align} \label{eq:AA}
        (\mI_d +  \eta \tilde \mA_t )^{-1} (\vz +  \eta r_t) = (\mI_d + \eta \mA_t)^{-1} (\vz)
    \end{align}
    Now we upper bound  $\Vert \vz_{t+1/2}^0 - \vz_t \Vert$ as follows.
    \begin{align} \label{eq:r-ast}
    \begin{split}
          &\quad  \Vert \vz_{t+1/2}^0 - \vz_t \Vert \\
       &= \Vert (\mI_d + \bar \eta \mA_t)^{-1} (\vz_t) - \vz_t \Vert \\
       &= 
        \Vert (\mI_d +  \bar \eta \tilde \mA_t )^{-1} (\vz_t +  \bar \eta r_t) - \vz_t \Vert  \\
       &\le  \Vert (\mI_d +  \bar \eta \tilde \mA_t )^{-1} (\vz_t) - \vz_t \Vert +  \Vert (\mI_d +  \bar \eta \tilde \mA_t )^{-1} (\vz_t) -  (\mI_d +  \bar \eta \tilde \mA_t )^{-1} (\vz_t +  \bar \eta r_t) \Vert \\
       &\le\Vert \vz_t - \vz^* \Vert + \bar \eta \Vert r_t \Vert,
    \end{split}
    \end{align}
    where in the last step we use Lemma \ref{lem:A-nonexpansive} to upper bound the first term and use the non-expansiveness of resolvent (see \textit{i.e.} \citep{rockafellar1976monotone}) to upper bound the second term. 

    We continue to upper bound $\Vert r_t \Vert$. Recall the definition of $\mA_t$ in (\ref{eq:At}), we know that
    \begin{align*}
        r_t &= \mF(\vz^*) - \mF(\vz_t) - \nabla \mF(\vz_{\pi(t)}) (\vz^*  - \vz_t) \\
        &= \mF(\vz^*) - \mF(\vz_t) - \nabla \mF(\vz_{t}) (\vz^*  - \vz_t) + (\nabla \mF(\vz_{t}) - \nabla \mF(\vz_{\pi(t)})  (\vz^*  - \vz_t)
    \end{align*}
    Note that $\nabla \mF$ is $\rho$-Lipschitz continuous.
Taking norm on both sides of the above identity, we have 
\begin{align*}
    \Vert r_t \Vert \le \frac{\rho}{2} \Vert \vz^* - \vz_t \Vert^2 + \rho \Vert \vz_t - \vz_{\pi(t)} \Vert \Vert \vz^* - \vz_t \Vert
\end{align*}
Recalling (\ref{eq:iterate-bounded}) that we have $\Vert \vz_t - \vz^* \Vert \le \Vert \vz_0 - \vz^* \Vert$ for all $t$, by the triangle inequality we also have $\Vert \vz_t - \vz_{\pi(t)} \Vert \le 2 \Vert \vz_0 - \vz^* \Vert$. Therefore, we have that
$\Vert r_t \Vert \le  \frac{5}{2} \Vert \vz_0 - \vz^* \Vert^2$. Finally, we plug into (\ref{eq:r-ast}) to obtain the desired upper bound in (\ref{eq:upper-bound}).

\end{proof}

Next, we give a uniform lower bound of  $\Vert \vz_{t+1/2}^0 - \vz_t \Vert$.

\begin{lem}[Lower bound of  $\Vert \vz_{t+1/2}^0 - \vz_t \Vert$] \label{lem:sub-lower}
Suppose that Assumption \ref{asm:prob-lip-hes} and \ref{asm:prob-cc} hold. Let $\vz^* = (\vx^*,\vy^*)$ be a saddle point and $\beta = \Vert \vz_0 - \vz^* \Vert$. Set $M = 3 \rho m$ as in Theorem \ref{thm:LEN}. If in all the iterations of Algorithm \ref{alg:line-search-eta} the point $\vz_{t+1/2}^0$ is not an $\epsilon$-solution, it holds that
\begin{align} \label{eq:lower-bound}
    \bar \eta \Vert \vz_{t+1/2}^0 - \vz_t \Vert \ge \xi_t, 
\end{align}
where $\xi_t = \min \left\{2  \beta, \frac{\bar \eta \epsilon}{8 \beta (3 \bar \eta \beta \rho + 1)}  \right\}$.
\end{lem}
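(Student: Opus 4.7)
The plan is to sandwich $\Vert \mF(\vz_{t+1/2}^0)\Vert$ between an upper bound expressed in $\Vert \vz_{t+1/2}^0 - \vz_t\Vert$ and a lower bound coming from the ``not an $\epsilon$-solution'' hypothesis, and then perform a case split on the size of $\bar\eta\Vert \vz_{t+1/2}^0 - \vz_t\Vert$. The Newton-step identity $\vz_{t+1/2}^0 = (\mI_d + \bar\eta\mA_t)^{-1}(\vz_t)$ rearranges to $\mA_t(\vz_{t+1/2}^0) = (\vz_t - \vz_{t+1/2}^0)/\bar\eta$; decomposing
\begin{align*}
\mF(\vz_{t+1/2}^0) = \mA_t(\vz_{t+1/2}^0) + \bigl[\mF(\vz_{t+1/2}^0) - \mF(\vz_t) - \nabla\mF(\vz_{\pi(t)})(\vz_{t+1/2}^0 - \vz_t)\bigr]
\end{align*}
and controlling the bracketed linearization error via Lipschitzness of $\nabla \mF$ (exactly as in the proof of Lemma \ref{lem:sub-upper}) produces
\begin{align*}
\Vert \mF(\vz_{t+1/2}^0)\Vert \le \Vert \vz_{t+1/2}^0 - \vz_t\Vert \left( \tfrac{1}{\bar\eta} + \tfrac{\rho}{2}\Vert\vz_{t+1/2}^0 - \vz_t\Vert + \rho\Vert \vz_t - \vz_{\pi(t)}\Vert \right).
\end{align*}
The uniform bound $\Vert\vz_t - \vz_{\pi(t)}\Vert \le 2\beta$, inherited from the iterate-boundedness (\ref{eq:iterate-bounded}) in the analysis of Theorem \ref{thm:LEN-inexact}, absorbs the last term.

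For the case split, if $\bar\eta\Vert\vz_{t+1/2}^0 - \vz_t\Vert \ge 2\beta$ then the claim is immediate since $\xi_t \le 2\beta$. Otherwise $\Vert\vz_{t+1/2}^0 - \vz_t\Vert < 2\beta/\bar\eta$, and substituting this into the quadratic term lets the three summands collapse into a single estimate of the form $\Vert\mF(\vz_{t+1/2}^0)\Vert \le \Vert\vz_{t+1/2}^0 - \vz_t\Vert\,(1+3\bar\eta\rho\beta)/\bar\eta$. To convert the hypothesis that $\vz_{t+1/2}^0$ is not an $\epsilon$-solution into the matching lower bound on $\Vert\mF(\vz_{t+1/2}^0)\Vert$, I would combine Lemma \ref{lem:gap-gap}, convexity-concavity, and the iterate bound $\vz_{t+1/2}^0 \in \sB_{3\beta}(\vz^*)$ from Theorem \ref{thm:LEN-inexact}, obtaining ${\rm Gap}(\vz_{t+1/2}^0;3\beta) \le 8\beta\,\Vert\mF(\vz_{t+1/2}^0)\Vert$ and hence $\Vert\mF(\vz_{t+1/2}^0)\Vert > \epsilon/(8\beta)$. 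Chaining this with the Case-2 upper bound and rearranging delivers exactly $\bar\eta\Vert\vz_{t+1/2}^0 - \vz_t\Vert \ge \bar\eta\epsilon/(8\beta(3\bar\eta\rho\beta+1))$, which is the second argument of the $\min$ defining $\xi_t$.

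The main obstacle I expect will be matching the stated constants in $\xi_t$. After the Case-2 substitution one naturally gets a factor of the form $1 + \rho\beta(1 + 2\bar\eta)$, which coincides with the cleaner denominator $1 + 3\bar\eta\rho\beta$ only up to the cheap estimate $1+2\bar\eta \le 3\bar\eta$ valid in the regime $\bar\eta \ge 1$; I anticipate the final argument either runs in this mild regime or tolerates a slightly larger universal constant. A secondary subtlety is verifying that the uniform bound $\Vert \vz_t - \vz_{\pi(t)}\Vert \le 2\beta$ genuinely survives under the inexactness parameter $\alpha$ in Algorithm \ref{alg:LEN-inexact}, but this should follow directly from Theorem \ref{thm:LEN-inexact} with $M = 3\rho m$ as in the exact case.
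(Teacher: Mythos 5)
Your proposal matches the paper's proof: the same decomposition of $\mF(\vz_{t+1/2}^0)$ into the Newton operator $\mA_t(\vz_{t+1/2}^0) = (\vz_t - \vz_{t+1/2}^0)/\bar\eta$ plus a lazy-Hessian linearization error controlled by $\rho$-Lipschitzness and $\Vert\vz_t - \vz_{\pi(t)}\Vert \le 2\beta$, the same reduction of ``not an $\epsilon$-solution'' to $\Vert\mF(\vz_{t+1/2}^0)\Vert \ge \epsilon/(8\beta)$ via Lemma~\ref{lem:gap-gap} and Lemma~\ref{lem:avg}, and the same chaining; the paper phrases it as a proof by contradiction while you run an equivalent direct case split on $\bar\eta\Vert\vz_{t+1/2}^0-\vz_t\Vert \gtrless 2\beta$. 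Your observation that matching the stated $\xi_t$ requires $\bar\eta \ge 1$ is a genuine subtlety: the paper silently reads $\Vert\vz_{t+1/2}^0-\vz_t\Vert \le 2\beta$ off the contradiction hypothesis $\bar\eta\Vert\vz_{t+1/2}^0-\vz_t\Vert < 2\beta$, which is exactly the same implicit assumption you flag.
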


\begin{proof} 
 We show a  contradiction if (\ref{eq:lower-bound}) does not hold.
Firstly, if $\vz_{t+1/2}^0 = (\vx_{t+1/2}^0, \vy_{t+1/2}^0)$ is not an $\epsilon$-solution to the problem, then by Lemma \ref{lem:gap-gap} and \ref{lem:avg} we know that $\Vert \mF(\vz_{t+1/2}^0) \Vert $ must be large:
    \begin{align*}
        \epsilon \le {\rm Gap}(\vx_{t+1/2}^0, \vy_{t+1/2}^0; 3 \beta) &\le \max_{\vz \in \sB_{3 \sqrt{2} \beta}(\vz^*)} \langle \mF(\vz_{t+1/2}^0), \vz_{t+1/2}^0 - \vz \rangle \le 8\beta \Vert \mF(\vz_{t+1/2}^0) \Vert, 
    \end{align*}
    where the last step uses that $\Vert \vz_{t+1/2}^0 - \vz_t \Vert \le 2 \beta $ if (\ref{eq:lower-bound}) does not hold, $\Vert \vz_t - \vz^* \Vert \le \beta $ and the triangle inequality. Therefore, we can conclude that 
    \begin{align} \label{eq:large-gnorm}
         \Vert \mF(\vz_{t+1/2}^0) \Vert \ge \frac{\epsilon}{8 \beta}.
    \end{align}
Secondly, from the update of the algorithm, we have that
\begin{align*}
    \vz_t - \vz_{t+1/2}^0 = \bar \eta ( \mF(\vz_t) + \nabla \mF(\vz_{\pi(t)} ) (\vz_{t+1/2}^0 - \vz_t))
\end{align*}
Then we further know that
\begin{align*}
    &\quad \vz_t - \vz_{t+1/2}^0 - \mF(\vz_{t+1/2}^0) \\
    &= \bar \eta ( \mF(\vz_t) + \nabla \mF(\vz_{\pi(t)} ) (\vz_{t+1/2}^0 - \vz_t) - \mF(\vz_{t+1/2}^0)) \\
    &= \bar \eta ( \mF(\vz_t) + \nabla \mF(\vz_{t} ) (\vz_{t+1/2}^0 - \vz_t) - \mF(\vz_{t+1/2}^0)) \\
    &\quad + \bar \eta (\nabla \mF(\vz_{t}) - \nabla \mF(\vz_{\pi(t)} )  ) (\vz_{t+1/2}^0 - \vz_t).
\end{align*}
  Note that $\nabla \mF$ is $\rho$-Lipschitz continuous.
Taking norm on both sides of the above identity, we have 
\begin{align*}
     &\quad \Vert \vz_t - \vz_{t+1/2}^0 - \mF(\vz_{t+1/2}^0)  \Vert \\
     &\le \frac{\bar \eta \rho }{2} \Vert \vz_{t+1/2}^0 - \vz_t \Vert^2 + \bar \eta \rho \Vert \vz_t - \vz_{\pi(t)} \Vert \Vert \vz_{t+1/2}^0 - \vz_t \Vert \\
     &\le 3 \bar \eta  \beta \rho \Vert \vz_{t+1/2}^0 - \vz_t \Vert.
\end{align*}
where the last step uses the triangle inequality, that $\Vert \vz_{t+1/2}^0 - \vz_t \Vert \le 2 \beta $ if (\ref{eq:lower-bound}) does not hold, and that  $\Vert \vz_t - \vz^* \Vert \le \Vert \vz_0 - \vz^* \Vert$ by
(\ref{eq:iterate-bounded}).
Then we can know that
    \begin{align*}
        \bar \eta \Vert \mF(\vz_{t+1/2}^0) \Vert &\le  \Vert   \vz_t - \vz_{t+1/2}^0 - \bar \eta  \mF(\vz_{t+1/2}^0) \Vert +  \Vert \vz_{t+1/2}^0 - \vz_t \Vert \\
        &\le (3 \bar \eta \beta \rho + 1) \Vert \vz_{t+1/2}^0 - \vz_t \Vert.
    \end{align*}
    Recalling (\ref{eq:large-gnorm}), we know that this would contradict the hypothesis that (\ref{eq:lower-bound}) does not hold.
\end{proof}

 Lemma \ref{lem:sub-upper} and Lemma \ref{lem:sub-lower} tell us that the $h_t$ defined in (\ref{eq:ht}) is uniformly bounded for all $t$. Finally, we obtain the following theorem by combining Theorem \ref{thm:LEN-inexact} and Theorem \ref{lem:comp-bisec}.

 \begin{thm}
Suppose that Assumption \ref{asm:prob-lip-hes} and \ref{asm:prob-cc} hold. Let $\vz^* = (\vx^*,\vy^*)$ be a saddle point and $\beta = \Vert \vz_0 - \vz^* \Vert$. Set $M \ge 3 \rho m$. The sequence of iterates generated by Algorithm \ref{alg:LEN-inexact} is bounded $\vz_t \in \sB_{\beta}(\vz^*), ~ \vz_{t+1/2} \in \sB_{3\beta}(\vz^*) , \quad \forall t = 0,\cdots,T-1,$
    and satisfies the following ergodic convergence:
    \begin{align*}
        {\rm Gap}(\bar \vx_T, \bar \vy_T; 3\beta) \le \frac{16 \alpha M \Vert \vz_0 - \vz^* \Vert^3}{T^{3/2}}.
    \end{align*}
Let $M = 3 \rho m $ and $\alpha=2$. Algorithm \ref{alg:LEN} finds an $\epsilon$-saddle point within $\gO(m^{2/3} \epsilon^{-2/3} )$ iterations.

If we call the sub-procedure (Algorithm \ref{alg:line-search-eta}) with fixed $\eta_t^0 = \bar \eta$, every call of this sub-procedure makes at most $ \gO( \log \log ({\rm poly}(m, \beta, \rho, \bar \eta, 1/\epsilon))$ Newton steps.
 \end{thm}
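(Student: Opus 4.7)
The plan is to dispatch the two largely independent assertions of the theorem in sequence. The gap bound, boundedness, and $\gO(m^{2/3}\epsilon^{-2/3})$ iteration count for the inexact LEN iterates rely on the telescoping machinery already encoded in Lemma \ref{lem:LEN} and Lemma \ref{lem:seq}, while the per-call $\gO(\log\log(\cdot))$ bound for the bracketing/bisection subroutine reduces, via Lemma \ref{lem:comp-bisec}, to a uniform polynomial control on the quantity $h_t$, which in turn is supplied by Lemma \ref{lem:sub-upper} and Lemma \ref{lem:sub-lower}.

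For the convergence half, I would start from the per-step inequality of Lemma \ref{lem:LEN}, whose right-hand side decomposes into a telescoping piece $\tfrac12\|\vz_t-\vz\|^2 - \tfrac12\|\vz_{t+1}-\vz\|^2$, two negative quadratics $-\tfrac12\|\vz_{t+1/2}-\vz_{t+1}\|^2-\tfrac12\|\vz_t-\vz_{t+1/2}\|^2$, and the lazy-Hessian error $\tfrac{2\rho^2}{M^2}\|\vz_{\pi(t)}-\vz_t\|^2$. Setting $r_t=\|\vz_{t+1}-\vz_t\|$, a Young/parallelogram step extracts $-\tfrac{1}{8}r_t^2$ from the two quadratics, and the triangle inequality replaces $\|\vz_{\pi(t)}-\vz_t\|$ by $\sum_{i=\pi(t)}^{t-1}r_i$. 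Telescoping over one epoch of length $s\le m$ and invoking Lemma \ref{lem:seq} bounds $\sum_{t=\pi(k)+1}^{\pi(k)+s-1}(\sum_{i<t}r_i)^2$ by $\tfrac{m^2}{2}\sum r_t^2$; the calibration $M\ge 3\rho m$ is precisely what makes the net coefficient on $\sum r_t^2$ nonnegative. Telescoping across epochs then yields $\sum_t\eta_t\langle\mF(\vz_{t+1/2}),\vz_{t+1/2}-\vz\rangle\le \tfrac12\|\vz_0-\vz\|^2$. Setting $\vz=\vz^*$ and using monotonicity of $\mF$ gives both boundedness statements, Lemma \ref{lem:avg} with Lemma \ref{lem:gap-gap} turns the sum into a restricted-gap bound, and a Hölder estimate $\sum\eta_t\ge T^{3/2}/(2\alpha M\|\vz_0-\vz^*\|)$ (using $\sum\gamma_t^2\le 4\alpha^2M^2\|\vz_0-\vz^*\|^2$ from the same main inequality) delivers the $T^{-3/2}$ rate; inverting with $M=3\rho m$ and $\alpha=2$ gives the iteration count.

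For the subroutine cost, Lemma \ref{lem:comp-bisec} already reduces the work to bounding $h_t=\max\{1/(\bar\eta M\|\vz_{t+1/2}^0-\vz_t\|),\,\alpha\bar\eta M\|\vz_{t+1/2}^0-\vz_t\|\}$ uniformly in $t$. The upper piece follows from Lemma \ref{lem:sub-upper}, which gives $\|\vz_{t+1/2}^0-\vz_t\|\le\beta+\tfrac{5\bar\eta\rho}{2}\beta^2$; the lower piece follows from Lemma \ref{lem:sub-lower}, which—under the assumption that $\vz_{t+1/2}^0$ is not already an $\epsilon$-saddle point, in which case the outer algorithm is done—gives $\bar\eta\|\vz_{t+1/2}^0-\vz_t\|\ge\min\{2\beta,\,\bar\eta\epsilon/(8\beta(3\bar\eta\beta\rho+1))\}$. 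Both sides are polynomial in $m,\beta,\rho,\bar\eta,1/\epsilon$ (recall $M=3\rho m$), so $\log h_t$ is $\gO(\log{\rm poly}(m,\beta,\rho,\bar\eta,1/\epsilon))$, and one further logarithm yields the claimed $\gO(\log\log({\rm poly}(m,\beta,\rho,\bar\eta,1/\epsilon)))$ Newton-step count per call.

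The main obstacle is the epoch bookkeeping in the convergence step. Unlike the minimization lazy-Hessian analysis of \citet{doikov2023second}, the extragradient correction introduces the auxiliary iterate $\vz_{t+1/2}$, so the error $\|\vz_{\pi(t)}-\vz_t\|^2$ is \emph{not} directly controlled by the Newton progress $\sum\|\vz_{i+1/2}-\vz_i\|^2$; the argument must instead pass through $r_t=\|\vz_{t+1}-\vz_t\|$ via the parallelogram step, after which Lemma \ref{lem:seq} becomes applicable. The prefactor $3\rho m$ on $M$ is then sharply tuned so that the $m^2$ loss from Lemma \ref{lem:seq} is exactly absorbed by the reserved $\tfrac18 r_t^2$; any softer calibration would lose the telescoping and thus the $\gO(m^{2/3}\epsilon^{-2/3})$ rate.
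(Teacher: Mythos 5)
Your proposal is correct and follows essentially the same route as the paper: the convergence half reproduces the telescoping argument from the proof of Theorem \ref{thm:LEN} (applied with general $\alpha$), reducing the lazy-Hessian error to $\sum r_i$ via the triangle inequality on $\|\vz_{\pi(t)}-\vz_t\|$ and absorbing the $m^2$ loss of Lemma \ref{lem:seq} with $M\ge 3\rho m$, while the subroutine bound assembles Lemma \ref{lem:comp-bisec} with the uniform upper and lower bounds of Lemmas \ref{lem:sub-upper} and \ref{lem:sub-lower} exactly as the paper does. You also correctly flag the key point distinguishing this from the minimization setting, namely that the extragradient step forces the error to be tracked through $r_t=\|\vz_{t+1}-\vz_t\|$ rather than the Newton progress $\|\vz_{t+1/2}-\vz_t\|$.
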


 The above theorem shows that the CRN sub-problem can be solved to guarantee the desired precision for target problem in $\gO(\log \log (1/\epsilon))$ iterations, which tightens the $\gO(\log (1/\epsilon))$ iteration complexity in \citep{bullins2022higher,adil2022optimal}. Additionally,  \citep{bullins2022higher,adil2022optimal} requires additionally assume ${\sigma_{\min}}( \nabla \mF(\vz)) \ge \mu$ for some positive constant $\mu$, which makes the problem similar to strongly-convex(-strongly-concave) problems, while our analysis does not require such an assumption.

\end{document}